\nonstopmode
\documentclass[10pt,twocolumn]{quick-article}

\usepackage{smallcolumns}


\makeatletter
\newcounter{tmp@counter@storage}
\newcommand{\tmpsetcounter}[2]{%
  \setcounter{tmp@counter@storage}{\value{#1}}%
  \setcounter{#1}{#2}\addtocounter{#1}{-1}}
\newcommand{\tmprestorecounter}[1]{%
  \setcounter{#1}{\value{tmp@counter@storage}}}
\makeatother

\usepackage{hyperref}

\usepackage{color}
\usepackage{graphics}
\usepackage{url}

\usepackage{aoa}
\usepackage{mathtools}
\def\etal{\textit{et~al.}}
\def\email#1{\texttt{\href{mailto:#1}{#1}}}

\newcommand{\EIS}[1]{{\em EIS\/}~{\bf #1}}

\usepackage[colorinlistoftodos,textsize=small]{todonotes}



\def\mNode{\circ}      
\def\mLeaf{\bullet}    
\def\mINode{\diamond}  

\def\mEdge{\mNode-\mNode}
\def\mDEdge{\mNode\rightarrow\mNode}

\def\mLEdge{\mLeaf-\mINode}
\def\mLDEdge{\mLeaf \rightarrow \mINode}
\def\mDLEdge{\mINode \rightarrow \mLeaf}
\def\mDIEdge{\mINode \rightarrow \mINode}
\def\mIEdge{\mINode \rightarrow \mINode}


\def\DHtext{DH}
\def\clsDH{\cls{\DHtext}}
\def\clsDHrl{\cls[\mLeaf]{\DHtext}}


\def\TLPtext{\text{3}LP}
\def\clsTLP{\cls{\TLPtext}}
\def\clsTLPrl{\cls[\mLeaf]{\TLPtext}}


\begin{document}


\title{An Exact Enumeration of Distance-Hereditary Graphs}

\author{%
  C\'{e}dric Chauve\thanks{Dept.~of~Mathematics, Simon Fraser University, 8888
    University Drive, V5A~1S6, Burnaby (BC), Canada,
    \email{cedric.chauve@sfu.ca}}\and %
  \'Eric Fusy\thanks{CNRS \& LIX, \'Ecole Polytechnique, 91120 Palaiseau,
    France, \email{fusy@lix.polytechnique.fr}}\and %
  J\'{e}r\'{e}mie Lumbroso\thanks{Dept.~of~Computer~Science, Princeton
    University, 35~Olden~Street, Princeton, NJ 08540, USA,
    \email{lumbroso@cs.princeton.edu}}}

\date{}

\maketitle

\begin{abstract}
  Distance-hereditary graphs form an important class of graphs, from the
  theoretical point of view, due to the fact that they are the totally
  decomposable graphs for the split-decomposition. The previous best
  enumerative result for these graphs is from Nakano~\etal~(J. Comp. Sci.
  Tech., 2007), who have proven that the number of distance-hereditary
  graphs on $n$ vertices is bounded by $\cramped{2^{\lceil 3.59n\rceil}}$.
  
  In this paper, using classical tools of enumerative combinatorics, we
  improve on this result by providing an \emph{exact} enumeration of
  distance-hereditary graphs, which allows to show that the number of
  distance-hereditary graphs on $n$ vertices is tightly bounded by
  $\cramped{(7.24975\ldots)^n}$---opening the perspective such graphs could be
  encoded on $3n$ bits. We also provide the exact enumeration and
  asymptotics of an important subclass, the 3-leaf power graphs.

  Our work illustrates the power of revisiting graph decomposition results
  through the framework of analytic combinatorics.
\end{abstract}


\section*{Introduction}

The decomposition of graphs into tree-structures is a fundamental paradigm
in graph theory, with algorithmic and theoretical
applications~\cite{Buixuan08}. In the present work, we are interested in
the \emph{split-decomposition}, introduced by Cunningham and
Edmonds~\cite{Cunningham82,CuEd80} and recently revisited by
Gioan~\etal~\cite{GiPa12,GiPaTeCo13, ChMoRa12}. For the classical modular
and split-decomposition, the \emph{decomposition tree} of a graph $G$ is a
tree (rooted for the modular decomposition and unrooted for the split
decomposition) of which the leaves are in bijection with the vertices of
$G$ and whose internal nodes are labeled by indecomposable (for the chosen
decomposition) graphs; such trees are called \emph{graph-labeled trees} by
Gioan and Paul~\cite{GiPa12}. Moreover, there is a one-to-one
correspondence between such trees and graphs. The notion of a graph being
\emph{totally decomposable} for a decomposition scheme translates into
restrictions on the labels that can appear on the internal nodes of its
decomposition tree. For example, for the split-decomposition, totally
decomposable graphs are the graphs whose decomposition tree's internal
nodes are labeled only by cliques and stars; such graphs are called
\emph{distance-hereditary graphs}. They generalize the well-known
\emph{cographs}, the graphs that are totally decomposable for the modular
decomposition, and whose enumeration has been well studied, in particular
by Ravelomanana and Thimonier~\cite{RaTh01}, also using techniques from
analytic combinatorics

Efficiently encoding graph classes\footnote{By which we mean, describing
  any graph from a class with as few bits as possible, as described for
  instance by Spinrad~\cite{Spinrad03}.} is naturally linked to the
enumeration of such graph classes. Indeed the number of graphs of a given
class on $n$ vertices implies a lower bound on the best possible encoding
one can hope for. Until recently, few enumerative properties were known
for distance-hereditary graphs, unlike their counterpart for the modular
decomposition, the cographs. The best result so far, by
Nakano~\etal~\cite{NaUeUn09}, relies on a relatively complex encoding on
$4n$ bits, whose detailed analysis shows that there are at most
$\cramped{2^{\lfloor 3.59n\rfloor}}$ unlabeled distance-hereditary graphs
on $n$ vertices. However, using the same techniques, their result also
implies an upper-bound of $2^{3n}$ for the number of unlabeled cographs
on $n$ vertices, which is far from being optimal for these graphs, as it
is known that, asymptotically, there are
$C \cramped{d^n}/\cramped{n^{3/2}}$ such graphs where $C = 0.4126\dots$
and $d = 3.5608\dots$~\cite{RaTh01}. This suggests there is room for
improving the best upper bound on the number of distance-hereditary graphs
provided by Nakano~\etal~\cite{NaUeUn09}, which was the main purpose of
our present work.

\subsection*{This paper.}

Following a now well established approach, which enumerates graph classes
through a tree representation, when available (see for example the survey
by Gim\'enez and Noy~\cite{GiNo09} on tree-decompositions to count
families of planar graphs), we provide \emph{combinatorial
  specifications}, in the sense of Flajolet and Sedgewick~\cite{FlSe09},
of the split-decomposition trees of distance-hereditary graphs and 3-leaf
power graphs, both in the labeled and unlabeled cases. From these
specifications, we can provide \emph{exact enumerations},
\emph{asymptotics}, and leave open the possibility of uniform random
samplers allowing for further empirical studies of statistics on these
graphs (see Iriza~\cite{Iriza15}).

In particular, we show that the number of distance-hereditary graphs on
$n$ vertices is bounded from above by $\cramped{2^{3n}}$, which naturally
opens the question of encoding such graphs on $3n$ bits, instead of $4n$
bits as done by Nakano~\etal~\cite{NaUeUn09}. We also provide similar
results for 3-leaf power graphs, an interesting class of distance
hereditary graphs, showing that the number of 3-leaf power graphs on $n$
vertices is bounded from above by $\cramped{2^{2n}}$.

\subsection*{Main results.}

Our main contribution is to introduce the idea of symbolically specifying
the trees arising from the split-decomposition, so as to provide the
(previously unknown) exact enumeration of certain important classes of
graphs.

Our grammars for distance-hereditary graphs are in
Subsection~\ref{sec:dh}, and our grammars for 3-leaf power graphs are in
Subsection~\ref{sec:3lp}. We provide here the corollary that gives the
beginning of the exact enumerations for the unlabeled and unrooted
versions of both classes\footnote{With the symbolic grammars, it is then
  easy to establish recurrences~\cite{FlZiVa94, Zimmermann91} to
  efficiently compute the enumeration--to the extent that we were
  trivially able to obtain the first 10\,000 terms of the enumerations.
  See a survey by Flajolet and Salvy~\cite[\S 1.3]{FlSa95} for more
  detail.}.

\begin{corollary}[Enumeration of connected, unlabeled, unrooted
  distance-hereditary graphs]
  \label{corr:enum-unlabeled-unrooted-dh}%
  The first few terms of the enumeration, \EIS{A00000}, are
  \begin{align*}
      &1, 1, 2, 6, 18, 73, 308, 1484, 7492, 40010, 220676,\\
      &\qquad 1253940, 7282316, 43096792, 259019070,\\
      &\qquad 1577653196, 9720170360, 60492629435\ldots
  \end{align*}
  and the asymptotics is
  $c\cdot\cramped{7.249751250\ldots^n}\cdot \cramped{n^{-5/2}}$ with
  $c\approx 0.02337516194\ldots$.
\end{corollary}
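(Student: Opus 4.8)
The plan is to start from the combinatorial specifications of the split-decomposition trees developed in Subsection~\ref{sec:dh}, which describe a connected distance-hereditary graph by its (unrooted) graph-labeled tree whose internal nodes carry cliques and stars. First I would pass to the \emph{rooted} versions of these specifications: rooting the decomposition tree either at a leaf (a vertex of the graph) or at an edge, and, for star nodes, distinguishing the centre from the extremities. Each such rooted class admits a recursive specification built from the constructions $\textsc{Seq}$ and $\textsc{MSet}$ together with the substitutions needed to encode the automorphisms of a clique node and of the two orbits of a star node; translating these specifications with the symbolic method of Flajolet and Sedgewick yields a finite positive system of functional equations for the ordinary generating functions, which in the unlabeled case becomes polynomial in the unknowns once the $\textsc{MSet}$'s are expanded via the usual P\'olya exponentiation, i.e.\ substituting $z\mapsto z^k$ and dividing by $k$ in the $k$-th term.

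Next I would carry out the singularity analysis of this system. Because the (relevant part of the) system is strongly connected and aperiodic, its solution has a unique dominant singularity $\rho$ of square-root type, located by the standard criterion that the Jacobian of the defining system becomes singular at $(\rho,\,\text{value})$; solving this numerically gives $\rho^{-1}=7.249751250\ldots$ and a singular expansion of the shape $Y(z)=Y(\rho)-a\sqrt{1-z/\rho}+O\!\big(1-z/\rho\big)$ for each rooted series. The transfer theorems of Flajolet and Sedgewick then give the growth constant for all the rooted, unlabeled families.

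The key remaining step is to recover the \emph{unrooted} count from the rooted ones via the dissymmetry theorem for trees: the generating function of connected unlabeled unrooted distance-hereditary graphs is an explicit integer-coefficient linear combination of the generating functions of decomposition trees rooted at a vertex, at an (unoriented) edge, and at an oriented edge, refined by the clique/star centre distinction. In this combination the leading $\sqrt{1-z/\rho}$ contributions cancel---exactly as for the classical enumeration of unrooted trees---so that the dominant singularity of the unrooted series is of type $(1-z/\rho)^{3/2}$; a further application of the transfer theorem yields the asymptotics $c\cdot(7.249751250\ldots)^n\cdot n^{-5/2}$, with $c$ read off from the coefficient of $(1-z/\rho)^{3/2}$ in the singular expansion (numerically $c\approx 0.02337516194\ldots$). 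Finally, expanding the functional system as a formal power series, or equivalently running the associated recurrence, produces the initial terms $1,1,2,6,18,73,308,\ldots$.

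I expect the main obstacle to be twofold. First, setting up the unlabeled specifications so that the clique- and star-node symmetries, together with the unordered subtrees hanging at each node, are accounted for by the correct cycle-index substitutions: a single misplaced $z\mapsto z^k$ corrupts both the series coefficients and the location of $\rho$. Second, rigorously justifying the cancellation of the square-root terms in the dissymmetry combination and checking that the surviving $(1-z/\rho)^{3/2}$ coefficient is nonzero, so that the $n^{-5/2}$ regime---rather than an accidental faster polynomial decay---genuinely occurs. Once these are in place, certifying $\rho$ and $c$ to the stated precision is a routine but careful numerical computation on the explicit system.
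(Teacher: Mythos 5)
Your overall strategy is the same as the paper's: take the unlabeled rooted specification coming from the split-decomposition grammar (Theorem~\ref{thm:dh-rooted-grammar}), translate it with P\'olya operators into a functional equation, invoke the Drmota--Lalley--Woods framework to get a square-root singularity at $\rho$ with $\rho^{-1}\approx 7.2497\ldots$, unroot via the dissymmetry theorem (Theorem~\ref{thm:dh-unrooted-grammar}), argue that the $\sqrt{1-z/\rho}$ terms cancel so that a $(1-z/\rho)^{3/2}$ term drives the $n^{-5/2}$ asymptotics, and read the initial terms off the unrooted specification. One cosmetic difference: the paper does not appeal to the strongly-connected-system Jacobian criterion; it collapses the system to a single equation $y=F(z,y)$ for $K(z)$ by exploiting the symmetry between clique nodes and center-rooted star nodes and the explicit rational expression $S(z)=(z+2K(z))^2/(1-z-2K(z))$.

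The real issue is that the two points you flag as ``obstacles'' are precisely the substantive content of the asymptotic claim, and you leave both of them unproved. The cancellation of the square-root term is not something you can import wholesale from the classical unrooted-tree situation: the paper makes it concrete by writing $U(z)=G(z,K(z))$ with an explicit rational $G(z,y)=y-z^2-(z+2y)^3/(1-z-2y)^2$, computing $G_y$, and then showing $G_y(\rho,\tau)=0$ by combining the two characteristic equations $y=F(z,y)$ and $1=F_y(z,y)$ so as to eliminate the exponential -- an algebraic identity specific to this grammar. Likewise, the non-vanishing of the coefficient of $Z^3$ (your $(1-z/\rho)^{3/2}$ term) is not checked numerically in the paper but established by a combinatorial lower bound: an unrooted graph on $n$ vertices gives rise to at most $n$ rooted objects, so $n\,[z^n]U(z)\ \geq\ [z^{n-1}]K(z)=\Theta(\rho^{-n}n^{-3/2})$, which forces $[z^n]U(z)=\Omega(\rho^{-n}n^{-5/2})$ and hence $e'\neq 0$; the constant is then $c=3e'/(4\sqrt{\pi})$. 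Your sketch supplies neither of these arguments, and without them the stated $c\cdot(7.249751250\ldots)^n\cdot n^{-5/2}$ asymptotics does not follow; with them filled in, your plan coincides with the paper's proof.
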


\begin{corollary}[Enumeration of connected, unlabeled, unrooted $3$-leaf
  power graphs]
  \label{corr:enum-unlabeled-unrooted-tlp}%
  The first few terms of the enumeration, \EIS{A00000}, are
  \begin{align*}
      &1, 1, 2, 5, 12, 32, 82, 227, 629, 1840, 5456, 16701,\\
      &\qquad 51939, 164688, 529070, 1722271, 5664786,\\
      &\qquad 18813360, 62996841, 212533216\ldots
  \end{align*}
  and the asymptotics is
  $c\cdot\cramped{3.848442876\ldots^n}\cdot \cramped{n^{-5/2}}$ with
  $c\approx 0.70955825396\ldots$.
\end{corollary}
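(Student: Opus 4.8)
The plan is to run the analytic‑combinatorics pipeline on the combinatorial specification of the split‑decomposition trees of $3$‑leaf power graphs given in Subsection~\ref{sec:3lp}, mirroring the treatment of the distance‑hereditary case. That grammar describes graph‑labeled trees whose internal nodes carry clique or star labels, subject to the additional local constraints (notably on the orientation of star centers) that single out $3$‑leaf powers among the totally split‑decomposable graphs; being built from a vertex atom, disjoint unions, sequences, and set/multiset constructions for the unordered clique attachments, it is a constructible specification in the sense of Flajolet and Sedgewick. First I would transcribe this grammar into a system of generating‑function equations --- exponential generating functions via the symbolic method in the labeled case, and ordinary generating functions together with P\'olya cycle‑index sums (so that $z \mapsto z^k$ substitutions appear from the multiset constructions at clique nodes) in the unlabeled case. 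Since the target is the unlabeled, unrooted series, I would first solve for the \emph{rooted} versions --- trees rooted at a distinguished leaf (equivalently, a vertex) and at a dangling half‑edge --- obtaining auxiliary series $B(z)$ that satisfy an explicit system, algebraic up to the harmless $z \mapsto z^k$ ($k \ge 2$) terms coming from the multisets.

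The second step is the passage from rooted to unrooted structures. Here I would invoke the dissymmetry theorem for trees, adapted to graph‑labeled trees as in the distance‑hereditary case: the unrooted class equals (trees rooted at a node) $+$ (trees rooted at an unoriented edge) $-$ (trees rooted at an oriented edge), with the clique‑ and star‑node types treated separately and with the P\'olya‑theoretic corrections needed so that each graph is counted exactly once. Substituting the solved rooted series into this identity yields the generating function $T(z)$ for connected, unlabeled, unrooted $3$‑leaf power graphs. Expanding $T(z)$ as a power series --- or, equivalently, extracting from the system the linear recurrence with polynomial coefficients it satisfies, following Flajolet and Zimmermann --- produces the initial terms $1,1,2,5,12,32,82,227,\dots$, and checking agreement with the stated list is then routine.

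For the asymptotics I would use singularity analysis. The rooted series $B(z)$ is obtained by inverting a smooth, positive system --- to which the Drmota--Lalley--Woods framework applies, the $B(z^k)$ with $k \ge 2$ being analytic well beyond the disc of convergence --- so it has a unique dominant singularity $\rho$ on its circle of convergence, of square‑root type: $B(z) = B(\rho) - b\sqrt{1 - z/\rho} + \cdots$. Fed into the dissymmetry identity, the square‑root terms cancel and the leading singular term of $T(z)$ becomes of order $(1 - z/\rho)^{3/2}$; the extra power of $(1 - z/\rho)$ is exactly what turns the usual $n^{-3/2}$ into $n^{-5/2}$, and the transfer theorems of Flajolet and Sedgewick give $[z^n] T(z) \sim c\, \rho^{-n} n^{-5/2}$. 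Solving numerically for the singularity gives $1/\rho = 3.848442876\ldots$, and computing the coefficient of $(1 - z/\rho)^{3/2}$ --- via a Newton--Puiseux expansion of the system at $\rho$ followed by the standard $1/\Gamma(-3/2)$ normalization --- gives $c \approx 0.70955825396\ldots$. The main obstacle I anticipate is not the asymptotic machinery but getting the \emph{unrooting} exactly right in the unlabeled setting: one must merge the dissymmetry theorem with symmetry corrections for clique nodes (which are sets), for the two ends of each edge, and for the distinguished star centers, and verify that no $3$‑leaf power graph is over‑ or under‑counted --- essentially the same bookkeeping that is the crux of the distance‑hereditary proof, which the $3$‑leaf power restriction only mildly simplifies.
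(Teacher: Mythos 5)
Your plan is essentially the paper's own proof: the rooted grammar of Theorem~\ref{thm:3lp-rooted-grammar} translated into a functional equation $y=F(z,y)$ with P\'olya substitutions, the dissymmetry theorem (Theorem~\ref{thm:3lp-unrooted-grammar}) to unroot, Drmota--Lalley--Woods plus transfer theorems for the $n^{-3/2}$ rooted and $n^{-5/2}$ unrooted asymptotics, and numerical solution of $\{y=F(z,y),\,1=F_y(z,y)\}$ for $\rho$ and the constant. The only steps you leave implicit --- that the square-root terms genuinely cancel (the paper verifies $G_y(\rho,\tau)=0$ by combining the two saddle equations) and that the $Z^3$ coefficient is nonzero (the paper's lower bound $n\,[z^n]U(z)\geq [z^{n-1}]S_X(z)$) --- are exactly the checks the paper supplies, so your approach matches theirs.
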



\section{Definitions and Preliminaries}

For a graph $G$, we denote by $V(G)$ its vertex set and $E(G)$ its edge
set. Moreover, for a vertex $x$ of a graph $G$, we denote by $N(x)$ the
neighbourhood of $x$, that is the set of vertices $y$ such that
$\{x,y\}\in E(G)$; this notion extends naturally to vertex sets: if
$\cramped{V_1}\subseteq V(G)$, then $N(\cramped{V_1})$ is the set of
vertices defined by the (non-disjoint) union of the neighbourhoods of the
vertices in $\cramped{V_1}$. Finally, the subgraph of $G$ induced by a
subset $\cramped{V_1}$ of vertices is denoted by $G[\cramped{V_1}]$.

A graph on $n$ vertices is \emph{labeled} if its vertices are identified
with the set $\{1,\dots,$ $n\}$, with no two vertices having the same
label. A graph is \emph{unlabeled} if its vertices are indistinguishable.

A clique on $k$ vertices, denoted $\cramped{K_k}$ is the complete graph on
$k$ vertices (\textit{i.e.}, there exists an edge between every pair of
vertices). A star on $k$ vertices, denoted $\cramped{S_k}$, is the graph
with one vertex of degree $k-1$ (the \emph{center} of the star) and $k-1$
vertices of degree $1$ (the \emph{extremities} of the star).

\subsection{Split-decomposition trees.\label{subsec:split}}

We first introduce the notion of \emph{graph-labeled tree}, due to Gioan
and Paul~\cite{GiPa12}, then define the split-decomposition and the
corresponding tree, described as a graph-labeled tree.

\begin{definition}\label{def:glt}
  A graph-labeled tree $(T,\mathcal{F})$ is a tree\footnote{This is a
    non-plane tree: the ordering of the children of an internal node does
    not matter---this is why in most of our grammars we describe the
    children as a $\Set$ instead of a $\Seq$, a sequence.} $T$ in which
  every internal node $v$ of degree $k$ is labeled by a graph
  $\cramped{G_v} \in \mathcal{F} $ on $k$ vertices, such that there is a
  bijection $\cramped{\rho_v}$ from the edges of $T$ incident to $v$ to
  the vertices of $\cramped{G_v}$.
\end{definition}


\begin{definition}
  A \emph{split}~\cite{Cunningham82} of a graph $G$ with vertex set $V$ is
  a bipartition $(\cramped{V_1},\cramped{V_2})$ of $V$ (\textit{i.e.},
  $V=\cramped{V_1}\cup V_2$, $\cramped{V_1}\cap \cramped{V_2}=\emptyset$)
  such that
  \begin{enumerate}[label=(\alph*), noitemsep, nosep]
  \item $|V_1|>2$ and $|V_2|>2$;
  \item every vertex of $N(V_1)$ is adjacent to every of $N(V_2)$.
  \end{enumerate}
\end{definition}

\noindent A graph without any split is called a \emph{prime} graph. A
graph is \emph{degenerate} if any partition of its vertices without a
singleton part is a split: cliques and stars are the only such 
graphs.

\begin{figure*}[t!]
  \centering
  \begin{minipage}[b]{.45\linewidth}
    \centering
    \includegraphics[scale=0.38]{./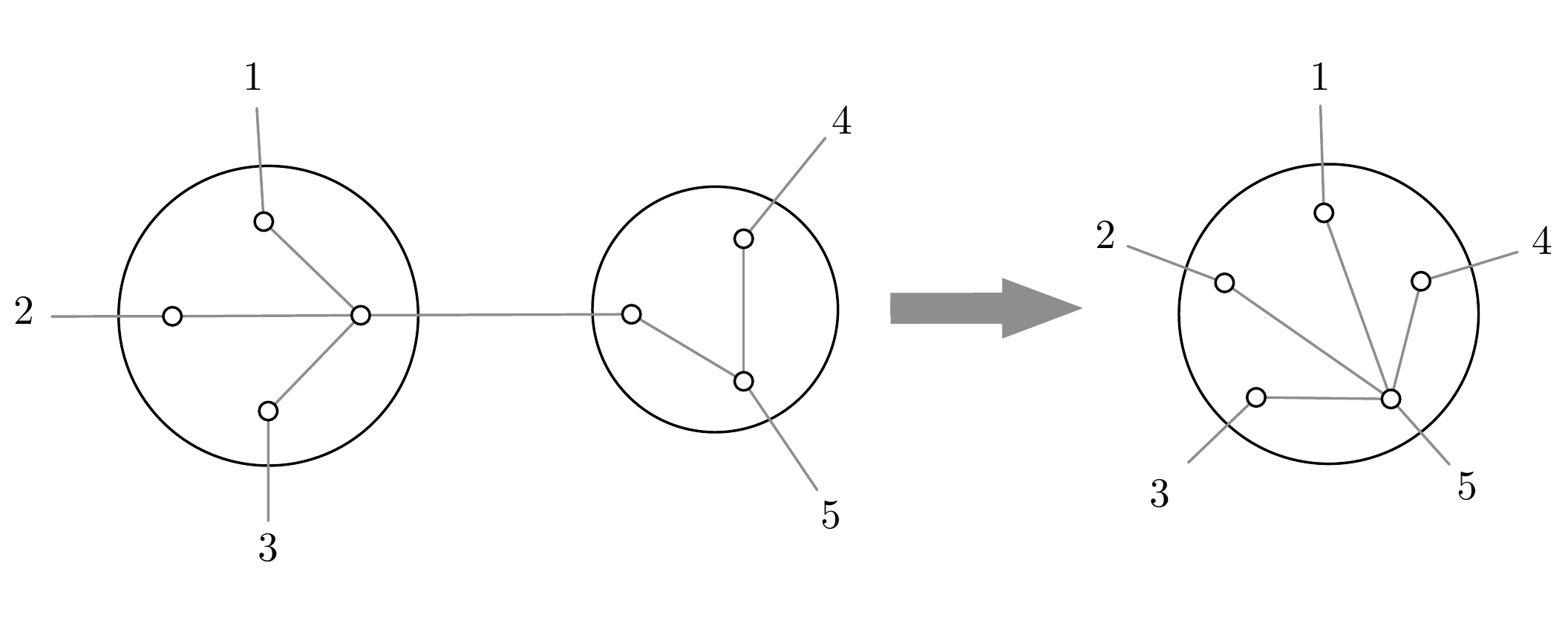}
    \subcaption{Example of a star-join.
      \label{fig:star-join}}
  \end{minipage}\hspace{0.08\linewidth}%
  \begin{minipage}[b]{.45\linewidth}
    \centering
    \includegraphics[scale=0.38]{./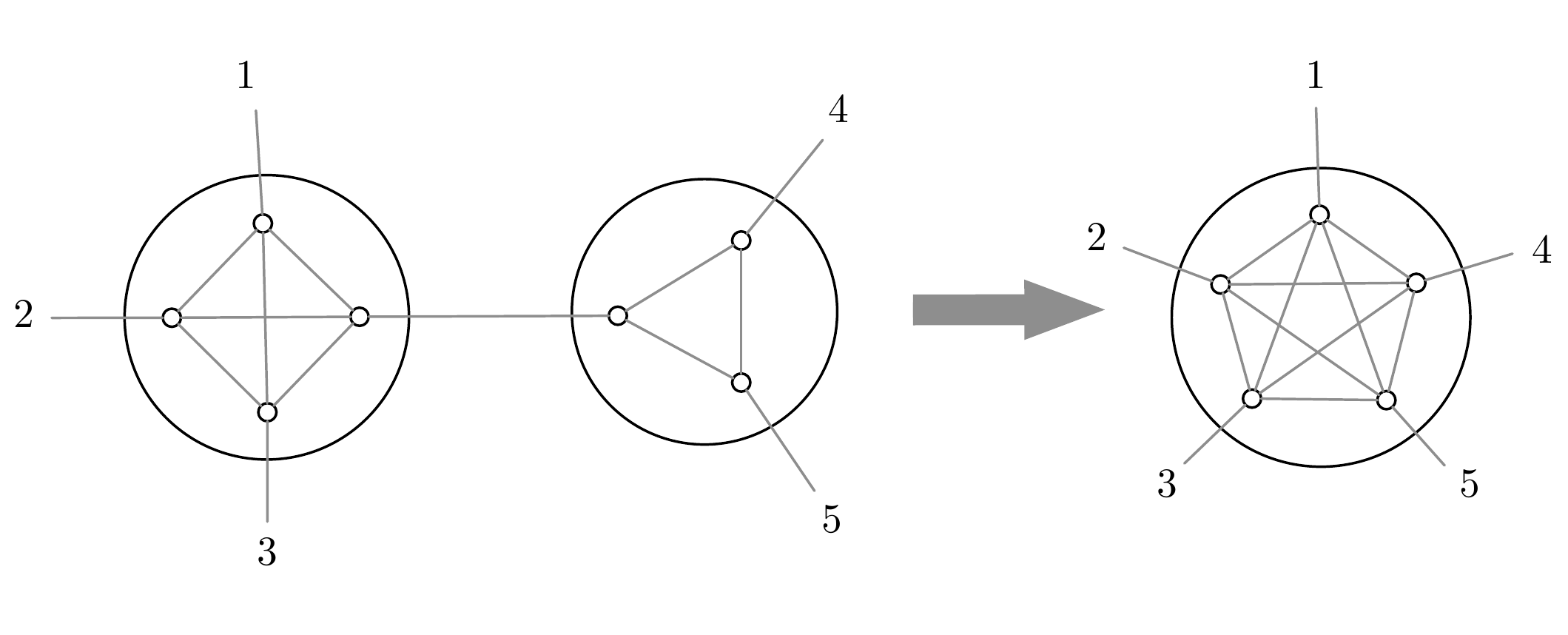}
    \subcaption{Example of a clique-join.\label{fig:clique-join}}
  \end{minipage}
  \caption{The star-join and clique-join operations result in the merging
    of two internal nodes of a split-decomposition tree. A
    split-decomposition tree in which neither one of these operations may
    be applied (and in which all non-clique and non-star nodes are prime
    nodes) is said to be \emph{reduced}.\label{fig:reduced}}
\end{figure*}

Informally, the split-decomposition of a graph $G$ consists in finding a
split $(\cramped{V_1}, \cramped{V_2})$ in $G$, followed by decomposing $G$
into two graphs $\cramped{G_1}=G[\cramped{V_1}\cup \{\cramped{x_1}\}]$
where $\cramped{x_1}\in N(\cramped{V_1})$ and
$\cramped{G_2}=G[\cramped{V_2}\cup \{\cramped{x_2}\}]$ where
$\cramped{x_1}\in N(\cramped{V_2})$ and then recursively decomposing
$\cramped{G_1}$ and $\cramped{G_2}$. This decomposition naturally defines
an unrooted tree structure of which the internal vertices are labeled by
degenerate or prime graphs and whose leaves are in bijection with the
vertices of $G$, called a \emph{split-decomposition tree}. A
split-decomposition tree $(T,\mathcal{F})$ with $\mathcal{F}$ containing
only cliques with at least three vertices and stars with at least three
vertices is called a \emph{clique-star tree}.

It can be shown that the split-decomposition tree of a graph might not be
unique (\textit{i.e.}, that several decompositions sequences of a given
graph can lead to different split-decomposition trees), but following
Cunningham~\cite{Cunningham82}, we obtain the following uniqueness result,
reformulated in terms of graph-labeled trees by Gioan and
Paul~\cite{GiPa12}.

\begin{theorem*}[Cunningham~\cite{Cunningham82}]
  For every connected graph $G$, there exists a unique split-decomposition
  tree such that:
  \begin{enumerate}[label=(\alph*), noitemsep, nosep]
  \item every non-leaf node has degree at least three;
  \item no tree edge links two vertices with clique labels;
  \item no tree edge links the center of a star to the extremity of
    another star.
  \end{enumerate}
\end{theorem*}

\noindent Such a tree is called \emph{reduced}, and this theorem
establishes a one-to-one correspondence between graphs and their reduced
split-decomposition trees. So enumerating the split-decomposition trees of
a graph class provides an enumeration for the corresponding graph class,
and we rely on this property in the following sections.

\subsection{Decomposable structures.\label{subsec:FS}}

In order to enumerate classes of split-decomposition trees, we use the
framework of decomposable structures, described by Flajolet and
Sedgewick~\cite{FlSe09}. We refer the reader to this book for details and
outline below the basics idea.

We denote by $\clsAtom$ the combinatorial family composed of a single
object of size $1$, usually called \emph{atom} (in our case, these refer
to a leaf of a split-decomposition tree, \textit{i.e.}, a vertex of the
corresponding graph).

Given two disjoint families $\cls{A}$ and $\cls{B}$ of combinatorial
objects, we denote by $\cls{A} + \cls{B}$ the \emph{disjoint union} of the
two families and by $\cls{A} \times \cls{B}$ the \emph{Cartesian product}
of the two families.

Finally, we denote by $\Set{\cls{A}}$ (resp.
$\cramped{\Set[\geqslant k]{\cls{A}}}$, $\cramped{\Set[k]{\cls{A}}}$) the
family defined as all sets (resp. sets of size at least $k$, sets of size
exactly $k$) of objects from ${\cls{A}}$, and by
$\cramped{\Seq[\geqslant k]{\cls{A}}}$, the family defined as all
sequences of at least $k$ objects from ${\cls{A}}$.

\begin{remark}
  Because this paper deals with classes both rooted (either at a
  vertex/leaf or an internal node) and unrooted, we use some notations to
  keep these distinct. But these notations are purely for clarity. 

  For instance, while we use $\cramped{\clsAtom_{\bullet}}$ to denote a
  \emph{rooted} vertex, and $\clsAtom$ to denote an \emph{unrooted}
  vertex, these are both translated in the same way in the associated
  generating functions and enumerations.
\end{remark}

\begin{remark}
  Decomposable structures specified by these grammars can either be:
  \begin{itemize}
  \item \emph{labeled}: in a given object, each atom is labeled by a
    distinct number between 1 and $n$ (the size of the object); this means
    that each ``skeleton'' of an object appears in $n!$ copies, for each
    of the possible way of labeling its individual atoms, and because each
    atom is distinguished, there are no symmetries;
  \item or \emph{unlabeled}: in which case, an atom is indistinguishable from
    the next, and so certain symmetries must be taken into account (so
    that two objects which are not decomposed in the same way but have the
    same ultimate shape are not counted twice).
  \end{itemize}
  \noindent It is often the case that enumerations for labeled classes are
  easier to obtain than for unlabeled ones. Our grammars allow to derive
  generating functions, enumerations, and asymptotics for both.
\end{remark}


\section{3-Leaf Power Graphs\label{sec:3lp}}


The first class that we discuss is that of $3$-leaf power graphs: a
chordal subset of distance-hereditary graphs\footnote{Not a maximal such
  subset, as it is known that \emph{ptolemaic graphs} are the intersection
  of chordal graphs and distance-hereditary graphs.}.

\begin{definition}
  A graph $G=(V,E)$ is a $k$-leaf power graph\footnote{This is a
    specialization, introduced by Nishimura~\etal~\cite[\S 1]{NiRaTh02b},
    of the concept of \emph{graph powers}, in which the root is a
    tree---but the definition can be extended to the case where $T$ is not
    a tree, but is a graph $H$ (in which case, we consider the distance
    between any two vertices in graph $H$, not two leaves of a tree).}, if
  there is a tree $T$ (called a $k$-leaf root of graph $G$) such that:
  \begin{enumerate}[label=(\alph*), noitemsep, nosep]
  \item the leaves of $T$ are the vertices $V$;
  \item there is an edge $xy\in E$ if and only if the distance in $T$
    between leaves $x$ and $y$ is at most $k$,
    $\cramped{d_T}(x,y)\leqslant k$.
  \end{enumerate}
\end{definition}

\noindent These families of graphs are relevant to
phylogenetics~\cite{NiRaTh02b}: from the the pairwise genetic distance
between a collection of species (which is a graph), it is desirable to
establish a tree which highlights the most likely ancestry (or more
broadly, the evolutionary relationships) relations between species.

We begin with the enumeration of 3-leaf power graphs, the smaller
combinatorial class, because the application of the dissymmetry theorem
(used to obtain an enumeration of the unrooted class given the grammar for
some rooted version of the class) in Subsection~\ref{subsec:unroot-3leaf}
is less involved for 3-leaf power graphs than it is for
distance-hereditary graphs.

\subsection{Grammar\protect\footnote{All grammars that we produce in this
    article yield an incorrect enumeration for the first two terms (graphs
    of size 1 and 2), because Cunningham's Theorem, presented in
    Subsection~\ref{subsec:split} requires non-leaf nodes to have degree
    at least three: thus the special cases of graphs involving only 1 or 2
    nodes must be treated non-recursively. While we could amend the
    grammars accordingly, we think it would be less elegant---especially
    since there is generally little confusion regarding those first few
    terms.\label{footnote:no-singletons}} from the split-decomposition.}

The starting point is the characterization of the split-decomposition tree
of 3-leaf power graphs, as introduced by Gioan and Paul~\cite{GiPa12}.

\begin{theorem*}[Characterization of $3$-leaf power split-decomposition
  tree~{\cite[\S~3.3]{GiPa12}}]
  A connected graph $G=(V,E)$ is a $3$-leaf power graph if and only if:
  \begin{enumerate}[label=(\alph*), noitemsep, nosep]
  \item its split-decomposition tree $ST(G)$ is a clique-star tree
    (implies that $G$ is distance-hereditary);
  \item the set of star-nodes forms a connected subtree of $T$;
  \item the center of a star-node is incident either to a leaf or a clique node.
  \end{enumerate}
\end{theorem*}

\noindent This is unsurprising given that an alternate (perhaps more
pertinent) characterization is that a 3-leaf power graph can be obtained
from a tree by replacing every vertex by a clique of arbitrary size.

\begin{theorem}%
  \label{thm:3lp-rooted-grammar}%
  The class ${\clsTLPrl}$ of $3$-leaf power graphs rooted at a
  vertex\footnote{Or, equivalently, rooted at a leaf of its
    split-decomposition tree.} is specified by
  \begin{align}
    \clsTLPrl       &=\cls[\bullet]{L}\times\left(\cls[C]{S}+\cls[X]{S}\right)+\cls[\mLeaf]{K}\\
    \cls[C]{S}      &= \Set[\geqslant 2]{\cls{L}+\cls[X]{S}}\\
    \cls[X]{S}      &= {\cls{L}}\times\Set[\geqslant 1]{\cls{L} + \cls[X]{S}}\label{eq:3lp-root-sx}\\
    \cls{L}         &= \clsAtom + \Set[\geqslant 2]{\clsAtom}\\
    \cls[\mLeaf]{L} &= \clsAtom_{\mLeaf} + \clsAtom_{\mLeaf}\times\Set[\geqslant 1]{\clsAtom}\\
    \cls[\mLeaf]{K} &= \clsAtom_{\mLeaf}\times\Set[\geqslant 2]{\clsAtom}\text{.}
  \end{align}
\end{theorem}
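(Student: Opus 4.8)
The plan is to read off each equation of the claimed specification directly from the combinatorial characterization of 3-leaf power split-decomposition trees (clique-star trees in which the star-nodes form a connected subtree and the center of every star-node is incident to a leaf or a clique-node), together with the reducedness conditions from Cunningham's theorem. The object to be described is a reduced clique-star tree rooted at a leaf; I would first fix a convention for how rooting orients the tree, so that every non-root node has a well-defined "parent edge" and a (possibly empty) collection of subtrees hanging below it, each such subtree itself being a rooted clique-star tree of the appropriate shape. The atoms $\clsAtom$, $\clsAtom_{\mLeaf}$ are the leaves (vertices of the graph), with $\clsAtom_{\mLeaf}$ marking the root leaf.

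The key steps, in order, are as follows. First, I would justify the auxiliary class $\cls{L}$: a maximal clique-node together with all its pendant leaves behaves, from the point of view of a single incident tree-edge, like "either a single leaf, or a clique-node with at least two pendant leaves attached" — hence $\cls{L} = \clsAtom + \Set[\geqslant 2]{\clsAtom}$; the pointed variants $\cls[\mLeaf]{L}$ and $\cls[\mLeaf]{K}$ are the same constructions with the root leaf distinguished (the reducedness condition forbidding two adjacent clique-nodes is what forces us to bundle a clique with its leaves rather than chaining cliques). Second, I would analyze what can hang below a star-node. By the characterization, star-nodes form a connected subtree and a star's center is adjacent only to a leaf or a clique-node; so along a star-node, the neighbours split into "the center side" and "the extremity sides". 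Since the extremities of a star may be adjacent to other stars (their centers), each extremity subtree is recursively "either an $\cls{L}$-object, or another star entered through its center" — this yields the set $\Set{\cls{L}+\cls[X]{S}}$ appearing in both $\cls[C]{S}$ and $\cls[X]{S}$, with the cardinality constraints coming from the degree-$\geq 3$ condition on internal nodes. I would treat $\cls[C]{S}$ as a star-subtree entered through its center (so the center edge is the parent edge, and all $\geq 2$ remaining edges lead to extremity subtrees, each an $\cls{L}+\cls[X]{S}$), and $\cls[X]{S}$ as a star-subtree entered through an extremity (so one distinguished edge is the center, contributing an $\cls{L}$ factor since the center must see a leaf or clique — hence the leading $\cls{L}\times$ in \eqref{eq:3lp-root-sx} — and the remaining $\geq 1$ edges lead to further extremity subtrees). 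Third, I would assemble the root equation: a 3-leaf-power rooted at a leaf is that leaf attached to the rest of the tree through a single edge; that edge leads into the tree either at a clique-node (giving $\cls[\mLeaf]{K}$, a clique with $\geq 2$ further pendant leaves to meet the degree condition) or at a star-node, which is entered either through its center ($\cls[C]{S}$) or through an extremity ($\cls[X]{S}$) — and in the latter two cases the root leaf is recorded by the $\cls[\mLeaf]{L}$ factor, explaining $\clsTLPrl = \cls[\mLeaf]{L}\times(\cls[C]{S}+\cls[X]{S}) + \cls[\mLeaf]{K}$. Finally I would note the degenerate small cases (sizes $1$ and $2$) are excluded as flagged in footnote~\ref{footnote:no-singletons}.

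The main obstacle I expect is bookkeeping the reducedness and connectivity constraints so that the grammar is simultaneously \emph{sound} (every generated object is a genuine reduced clique-star tree of a 3-leaf power graph) and \emph{complete} (every such tree is generated exactly once). The completeness/uniqueness direction is the delicate part: I must check that the decomposition of a rooted tree into "parent edge $+$ bag of subtrees" is forced — in particular that grouping a clique-node with its pendant leaves into a single $\cls{L}$-block, and distinguishing star-nodes by whether they are entered via center or via extremity, partitions the set of configurations with no overlap and no omission. This amounts to verifying that conditions (b) "no edge between two clique nodes" and (c) "no edge from a star-center to a star-extremity" of Cunningham's reduced-tree theorem, combined with the 3-leaf-power conditions (star-nodes connected, star-center incident to leaf or clique), exactly match the adjacencies permitted by the grammar; I would do this by a straightforward case analysis on the type of a node and the type of its parent edge, checking the two implications in turn. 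The use of $\Set$ rather than $\Seq$ throughout is the correct choice because the trees are non-plane, and the $\geqslant k$ subscripts are precisely what enforce the "degree at least three" requirement on internal nodes once the parent edge is accounted for.
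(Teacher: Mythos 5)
Your overall route is the same as the paper's: translate the Gioan--Paul characterization of $3$-leaf-power split-decomposition trees, together with the reducedness conditions of Cunningham's theorem, directly into the rooted grammar by a case analysis on node types and on whether a star is entered through its center or through an extremity; this is exactly how the paper argues.

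There is, however, one structural claim in your write-up that is wrong, and it concerns precisely the adjacency fact the whole case analysis rests on. You assert that the extremities of a star may be adjacent to other stars \emph{at their centers}, and that an extremity subtree is ``another star entered through its center.'' An extremity--center edge between two stars is exactly what the star-join forbids (Cunningham's condition that no tree edge links the center of a star to an extremity of another star), and the $3$LP characterization rules it out a second time by requiring every star center to be incident to a leaf or a clique node. In a reduced $3$LP tree, adjacent stars meet extremity-to-extremity; this is why the recursive term inside both $\cls[C]{S}$ and $\cls[X]{S}$ is $\cls[X]{S}$ and not $\cls[C]{S}$. The set $\Set[\geqslant 1]{\cls{L}+\cls[X]{S}}$ you then write down is the correct one, but it contradicts the justification you gave for it (and your own later, correct, description of $\cls[X]{S}$), so that justification needs to be fixed rather than merely restated. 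A smaller looseness sits in your assembly of the root equation: if the root leaf's edge leads to a clique node, this does not always yield $\cls[\mLeaf]{K}$; when that clique is itself adjacent to a star (which $3$LP trees permit, for at most one star), the configuration is captured by the $\clsAtom_{\mLeaf}\times\Set[\geqslant 1]{\clsAtom}$ branch of $\cls[\mLeaf]{L}$ multiplied by $\cls[C]{S}+\cls[X]{S}$, while $\cls[\mLeaf]{K}$ covers only the star-free case where the entire tree is a single clique. With these two points repaired, your argument coincides with the paper's proof.
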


\noindent In this combinatorial specification, we define several classes
of subtrees: we denote by $\cramped{\cls[X]{S}}$ (resp.
$\cramped{\cls[C]{S}}$) the class of split-decomposition trees rooted at a
star-node which are \emph{linked to their parent} by an extremity of this
star-node (resp. the center of this star-node).

Finally, because the structure of the split-decomposition tree of a 3-leaf
power graph only allows for cliques that are incident to at most one
star-node (and the rest of the edges must lead to leaves), we have three
classes $\cls{L}$, $\cramped{\cls[\mLeaf]{L}}$ and
$\cramped{\cls[\mLeaf]{K}}$ which express leaves and cliques\footnote{The
  class $\cls{L}$ is a class containing either a leaf; or a clique-node
  connected to all but one of its extremities to leaves. The class
  $\cramped{\cls[\mLeaf]{L}}$ is that same class, in which one of the
  leaves has been distinguished (as the root of the tree).}.

\begin{proof}
  In addition to the constraints specific to 3-leaf power
  split-decomposition trees given in the characterization above, because
  the split-decomposition trees we are enumerating are \emph{reduced} (see
  Cunningham's Theorem in Section~\ref{subsec:split}), there are two
  additional implicit constraints on their internal nodes:
  \begin{itemize}
  \item the center of a star cannot be incident to the extremity of
    another star (because then they would be merged with a
    \emph{star-join} operation, as in Figure~\ref{fig:star-join}, yielding
    a more concise split-decomposition tree);
  \item and two cliques may not be incident (or they would be merged with
    a \emph{clique-join} operation, as in Figure~\ref{fig:clique-join}).
  \end{itemize}
  
  \noindent The star-nodes form a connected subtree, with each star-node
  connected to others through their extremities; the centers are
  necessarily connected to ``leaves'', and the extremities may be
  connected to ``leaves''; ``leaves'' are either single nodes (actually
  leaves) or cliques (which are a set of more than two elements, because
  cliques have minimum size of 3 overall, including the parent
  node).\medskip

  \noindent First, the following equation
  \begin{align*}
    \cls[C]{S}      &= \Set[\geqslant 2]{\cls{L}+\cls[X]{S}}
  \end{align*}
  indicates that a subtree rooted at a star-node, linked to its parent
  (presumably a leaf) by its center, is a set of size at least 2 children,
  which are the extremities of the star-node: each extremity can either
  lead to a ``leaf'' or to another star-node entered through an extremity.

  Next, the equation
  \begin{align*}
    \cls[X]{S}      &= {\cls{L}}\times\Set[\geqslant 1]{\cls{L} + \cls[X]{S}}
  \end{align*}
  indicates that a subtree rooted at a star-node, linked to its parent by
  an extremity, is the Cartesian product of a ``leaf'' (connected through
  the center of the star-node) and a set of 1 or more children which are
  the extremities of the star-node: each leads either to a``leaf'' or to
  another star-node entered through an extremity.

  The ``leaves'' are then either an actual leaf of unit size, or a clique;
  the clique has to be of size at least 3 (including the incoming link)
  and the children can only be actual leaves. We are thus left with
  \begin{align*}
    \cls{L}         &= \clsAtom + \Set[\geqslant 2]{\clsAtom}\text{.}
  \end{align*}
  Finally, the rest of the grammar deals with the special cases that arise
  from when the split-decomposition tree does not contain any star-node at
  all.
\end{proof}

\noindent With the grammar for ${\clsTLPrl}$, we are able to produce the
exact enumeration for labeled rooted 3-leaf power graphs, and by a simple
algebraic trick, for unlabeled rooted 3-leaf power graphs.

\begin{corollary}[Enumeration of labeled $3$-leaf power graphs]
  Let $T(z)$ be the exponential generating function associated with the
  class ${\clsTLPrl}$. Then, the enumeration of labeled, \emph{unrooted}
  $3$-leaf power graphs, for $n\geqslant 3$, is given by
  \begin{align}
    t_n = (n-1)! [\cramped{z^n}] T(z)\text{,}
  \end{align}
  to the effect that the first few terms of the enumeration, \EIS{A00000}, are
  \begin{align*}
    &1, 1, 4, 35, 361, 4482, 68027, 1238841, 26416474,\\
    &\qquad 646139853, 17837851021, 548713086352, \ldots
  \end{align*}
\end{corollary}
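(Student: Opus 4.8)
The plan is to obtain the exponential generating function (EGF) $T(z)$ of the class $\clsTLPrl$ directly from the specification of Theorem~\ref{thm:3lp-rooted-grammar} by the symbolic method of Flajolet and Sedgewick, and then to convert the ``rooted at a vertex'' count into the ``unrooted'' count by the elementary observation that a labeled graph on $n$ vertices can be rooted at one of its vertices in exactly $n$ ways.

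\textbf{Step 1: the generating function.} The grammar of Theorem~\ref{thm:3lp-rooted-grammar} is a proper combinatorial specification built only from the admissible constructors $+$, $\times$, $\Set[\geqslant k]{\cdot}$ and atoms. For labeled classes these translate, respectively, into sum, product, the truncated exponential $\exp(A(z))-\sum_{0\le j<k}A(z)^j/j!$, and $z$ (a root atom $\clsAtom_{\bullet}$ translating the same way as an ordinary atom, by the Remark in Subsection~\ref{subsec:FS}). Applying this dictionary to the six equations of Theorem~\ref{thm:3lp-rooted-grammar} produces a finite positive system of functional equations for $T(z)$ together with the auxiliary EGFs of $\cls[C]{S}$, $\cls[X]{S}$, $\cls{L}$, $\cls[\mLeaf]{L}$, $\cls[\mLeaf]{K}$; since none of these classes has an object of size $0$, the implicit equations (in particular the one for $\cls[X]{S}$) are contractions in the $z$-adic sense and have a unique formal power series solution, so $T(z)$ is well defined and, one checks, has no term of degree $<3$. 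By the definition of an EGF and the soundness of the symbolic method for labeled classes, $n!\,[z^n]T(z)$ equals the number of size-$n$ labeled objects of $\clsTLPrl$. For $n\geqslant 3$, Cunningham's Theorem and the characterization theorem recalled in Subsection~\ref{subsec:split} identify these objects with the pairs consisting of a connected labeled $3$-leaf power graph $G$ on $n$ vertices together with a distinguished vertex of $G$: the reduced split-decomposition tree $ST(G)$ is a clique-star tree of exactly the shape described by the grammar, its leaves are identified with $V(G)$, and rooting $ST(G)$ at a leaf is the same as rooting $G$ at the corresponding vertex. (Sizes $n=1,2$ are excluded because Cunningham's degree-at-least-three condition is then vacuous; the two trivial graphs give $t_1=t_2=1$, recorded only for completeness.)

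\textbf{Step 2: from rooted to unrooted.} Because the vertices of a labeled graph are pairwise distinguishable, so are the leaves of its (unique) reduced split-decomposition tree, and that tree therefore has no nontrivial leaf-preserving automorphism. Hence each of the $t_n$ connected labeled $3$-leaf power graphs on $n$ vertices yields exactly $n$ distinct vertex-rooted objects of $\clsTLPrl$, one per choice of root, and conversely every object of $\clsTLPrl$ arises this way. Combining with Step~1,
\[
  n\,t_n \;=\; n!\,[z^n]T(z),\qquad\text{equivalently}\qquad t_n \;=\; (n-1)!\,[z^n]T(z)\qquad(n\geqslant 3).
\]
Solving the system of Step~1 by back-substitution (for instance $\cls{L}\mapsto e^z-1$, $\cls[\mLeaf]{L}\mapsto z\,e^z$, $\cls[\mLeaf]{K}\mapsto z(e^z-1-z)$, then $\cls[X]{S}$, $\cls[C]{S}$, and finally $T$), extracting $[z^n]T(z)$ and multiplying by $(n-1)!$ yields the announced values $1,1,4,35,361,4482,68027,\ldots$; one can also sanity-check $t_3=4$ by hand, the connected $3$-leaf power graphs on $3$ labeled vertices being $K_3$ and the three labeled copies of the path $P_3$.

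\textbf{Main obstacle.} There is no deep difficulty here; the one point I would verify with care is the bijection invoked in Step~1 --- specifically that rooting the split-decomposition tree at a leaf atom does not shift the size index (the root atom still contributes size $1$), and that the ad hoc treatment of $n=1,2$ does not leak into the range $n\geqslant 3$. Everything else is a mechanical application of the symbolic method followed by a routine coefficient extraction.
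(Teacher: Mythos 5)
Your proposal is correct and follows essentially the same route as the paper, which treats this corollary as an immediate consequence of the grammar of Theorem~\ref{thm:3lp-rooted-grammar} translated by the symbolic method, together with the observation (made explicitly in Subsection~\ref{subsec:unrooting}) that in the labeled case each graph on $n$ vertices corresponds to exactly $n$ vertex-rooted objects, so one divides by $n$, i.e.\ $t_n=(n-1)!\,[z^n]T(z)$. Your extra care about the $n=1,2$ special cases and the $t_3=4$ sanity check matches the paper's footnote on the first two terms.
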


\subsection{Unrooting unlabeled objects\label{subsec:unrooting}.}

The trees described by the specification of ${\clsTLPrl}$ have leaves
which are labeled, one of which is the root. Thus because each label has
equal opportunity of being the root, it is simple to obtain an enumeration
of the labeled \emph{unrooted} class by dividing by $n$.

When now considering \emph{unlabeled} trees, however, proceeding in this
way leads to an overcount of certain trees, because of new symmetries
introduced by the disappearance of labels. Fortunately, we can use the
\emph{dissymmetry theorem for trees}, which expresses the enumeration of
an unrooted class of trees in terms of the enumeration of the equivalent
rooted class of trees.

This theorem was introduced by Bergeron~\etal~\cite{BeLaLe98} in terms of
ordered and unordered pairs of trees, and was eventually reformulated in a
more elegant manner, such as in Flajolet and Sedgewick~\cite[VII.26
p.~481]{FlSe09} or Chapuy~\etal~\cite[\S 3]{ChFuKaSh08}. It states
\begin{align}\label{eq:dissymmetry}
  \cls{A} + {\cls{A}}_{\mDEdge} \simeq
  {\cls{A}}_{\mNode} + {\cls{A}}_{\mEdge}
\end{align}
where $\cls{A}$ is the unrooted class of trees, and
$\cramped{{\cls{A}}_{\mNode}}$, $\cramped{{\cls{A}}_{\mEdge}}$,
$\cramped{{\cls{A}}_{\mDEdge}}$ are the rooted class of trees respectively
where only the root is distinguished, an edge from the root is
distinguished, and a directed, outgoing edge from the root is
distinguished\footnote{Drmota~\cite[\S 4.3.3, p.~293]{HoECDrmota15}
  presents an elegant proof of this result by appealing to the notion of
  \emph{center} of the tree---which may be a single vertex or an edge;
  indeed, Drmota builds a bijection between the trees
  $\cramped{\cls[\mathrm{nc}]{A}}$ rooted at a non-central vertex/edge and
  trees rooted at a directed edge, by orienting the root of the first
  class in the direction of the center.}.

The application of this theorem may initially be perplexing, and so we
begin by making a couple of remarks.

\begin{lemma}\label{lem:no-leaves}
  In the dissymmetry theorem for trees, when rerooting at the nodes (or
  atoms) of a combinatorial tree-like class $\cls{A}$, leaves can be
  ignored.
\end{lemma}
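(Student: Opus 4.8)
\noindent The plan is to split each of the four terms of the dissymmetry identity~\eqref{eq:dissymmetry} according to whether the distinguished vertex, edge, or directed edge meets a leaf, and then to observe that all the leaf-meeting pieces cancel in pairs. First I would record the type decompositions
\[
  {\cls{A}}_{\mNode} \simeq {\cls{A}}_{\mLeaf} + {\cls{A}}_{\mINode},
  \quad
  {\cls{A}}_{\mEdge} \simeq {\cls{A}}_{\mLEdge} + {\cls{A}}_{\mINode-\mINode},
  \quad
  {\cls{A}}_{\mDEdge} \simeq {\cls{A}}_{\mLDEdge} + {\cls{A}}_{\mDLEdge} + {\cls{A}}_{\mDIEdge},
\]
sorting each (directed) edge according to the types of its two endpoints. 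No $\mLeaf-\mLeaf$ edge type appears, because every tree in $\cls{A}$ has at least three leaves (Footnote~\ref{footnote:no-singletons}) and hence at least one internal node, so that no edge joins two leaves and every leaf has an internal node as its unique neighbour.

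The key step is the chain of bijections
\[
  {\cls{A}}_{\mLeaf} \simeq {\cls{A}}_{\mLEdge} \simeq {\cls{A}}_{\mLDEdge} \simeq {\cls{A}}_{\mDLEdge},
\]
each immediate once one observes that a leaf $\ell$ is incident to exactly one edge $e_{\ell}$, whose other endpoint is internal: distinguishing $\ell$, distinguishing $e_{\ell}$, and distinguishing $e_{\ell}$ together with either of its two orientations all carry exactly the same information, the mutually inverse transfer maps being $\ell\mapsto e_{\ell}$ (with the relevant orientation) and ``return the leaf endpoint''. Substituting the three decompositions into~\eqref{eq:dissymmetry} and cancelling ${\cls{A}}_{\mLDEdge}$ against ${\cls{A}}_{\mLeaf}$ and ${\cls{A}}_{\mDLEdge}$ against ${\cls{A}}_{\mLEdge}$ leaves
\[
  \cls{A} + {\cls{A}}_{\mDIEdge} \simeq {\cls{A}}_{\mINode} + {\cls{A}}_{\mINode-\mINode},
\]
i.e.\ the dissymmetry theorem with ``node'' read everywhere as ``internal node'' and ``edge'' as ``internal--internal edge'' --- exactly the assertion that, when rerooting, leaves (and the tree edges incident to them) may be dropped.

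The one point that needs care --- and the only place where the hypothesis on $\cls{A}$ is genuinely used --- is the claim that every leaf has an internal neighbour, which is what makes ``the internal endpoint of an $\mLEdge$'' well defined and rules out $\mLeaf-\mLeaf$ edges. This is why the reduction must be stated for tree-like classes containing no object of size $1$ or $2$ (Footnote~\ref{footnote:no-singletons}): for the single edge on two leaves the reduced identity already fails, its left-hand side counting $1$ while its right-hand side counts $0$. Granting this, everything else is a formal manipulation of combinatorial classes, and since every bijection above preserves size and the placement of labels, the reduction transfers verbatim to ordinary and exponential generating functions, hence holds in both the unlabeled and labeled settings.
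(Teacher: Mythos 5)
Your proof is correct and follows essentially the same route as the paper's: decompose the pointed node, edge, and directed-edge classes by whether they involve a leaf, identify a leaf with its unique incident (possibly oriented) edge, and cancel. You are in fact slightly more explicit than the paper, since you spell out the fourth bijection ${\cls{A}}_{\mDLEdge}\simeq{\cls{A}}_{\mLeaf}$ and carry out the cancellation, and you handle the leaf--leaf edge caveat that the paper only mentions in a footnote.
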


\begin{proof}
  When we point a node of the class $\cls{A}$, we may distinguish whether
  it is an internal node or a leaf, which we respectively denote $\mINode$
  and $\mLeaf$ in the \emph{right hand side} of the following equations.
  Accordingly,
  \begin{align*}
    {\cls{A}}_{\mDEdge} &= {\cls{A}}_{\mLDEdge} + {\cls{A}}_{\mDIEdge} + {\cls{A}}_{\mDLEdge}\\
    {\cls{A}}_{\mEdge}  &= {\cls{A}}_{\mLEdge} + {\cls{A}}_{\mIEdge}\\
    {\cls{A}}_{\mNode}  &= {\cls{A}}_{\mLeaf} + {\cls{A}}_{\mNode}
  \end{align*}
  where the first equation should be understood as: if we mark a directed
  edge of the class $\cls{A}$, it can either go from an internal node to a
  leaf, from a leaf to an internal node, or from an internal node to
  another internal node\footnote{We are ignoring the very special case of
    a tree reduced to an edge, in which we may have an edge between two
    leaves; this explains why our unrooted grammars may, if uncorrected,
    be wrong for the first two terms. This is analogous to the initial
    term errors of our rooted grammars, as expressed in
    Footnote~\ref{footnote:no-singletons}.}.

  These equations may be further simplified upon observing that any edge
  of which one of the endpoints is a leaf, is entirely determined by that
  leaf, to the effect that
  \begin{align*}
    {\cls{A}}_{\mLeaf} = {\cls{A}}_{\mLEdge} =
    {\cls{A}}_{\mLDEdge}\text{.}
  \end{align*}
  Thus proving that one may disregard leaves entirely when applying the
  dissymmetry theorem for trees.
\end{proof}

\begin{remark}
  While the dissymmetry theorem considers pointed internal nodes, our
  grammars ${\clsTLPrl}$ and ${\clsDHrl}$ (respectively derived from the
  split-decomposition of 3-leaf power graphs and distance-hereditary
  graphs) are pointed at the \emph{leaves} of the split-decomposition tree
  (which correspond to the vertices of the original graph).
  
  This is not, in fact, a discrepancy. When we apply the dissymmetry
  theorem, we implicitly \emph{reroot} the trees from our grammars at
  internal nodes, which we express as subclasses $\cramped{\cls[x]{T}}$ of
  trees rooted in some specific type of internal node $x$. Rerooting an
  already rooted tree is relatively easy (while unrooting a rooted tree is
  not!).
\end{remark}

\begin{remark}
  The dissymmetry theorem establishes a bijection between two disjoint
  unions; this allows us to recover an equation on the coefficients,
  \begin{align}
    \begin{split}
      [\cramped{z^n}] A(z) &= [\cramped{z^n}] A_{\mNode}(z)\\
                           &+ [\cramped{z^n}] A_{\mEdge}(z)\\
                           &- [\cramped{z^n}] A_{\mDEdge}(z).
    \end{split}
  \end{align}
  However the subtraction has no combinatorial meaning, which means that
  once the dissymetry theorem has been applied, we lose the symbolic
  meaning of the equation.

  While this is enough to compute exact enumerations (by extracting the
  enumeration of each generating function and algebraically computing the
  equation), and is sufficient to deduce some asymptotics, there is not
  enough information, for instance, to yield a recursive
  sampler~\cite{FlZiVa94} or a Boltzmann
  sampler~\cite{DuFlLoSc04,FlFuPi07}---and we are instead left with ad-hoc
  methods to generate unrooted objects, see Iriza~\cite[\S~3.2]{Iriza15}.

  Unrooting the initial grammar while preserving the symbolic nature of
  the specification requires using a more complex combinatorial tool
  called \emph{cycle-pointing}\footnote{This operation, given a structure
    of size $n$, finds $n$ ways to group its atoms/vertices in cycles
    which mirror the symmetries of the structure. This is analogous to
    atom/vertex-pointing in labeled objects, where each structure of size
    $n$ can be pointed $n$ different ways (each atom/vertex can be pointed
    because they are each distinguishable and there are no symmetries that
    would make two different pointings equivalent).} introduced by
  Bodirsky~\etal~\cite{BoFuKaVi11}, and applied to these grammars by
  Iriza~\cite[\S 5.5]{Iriza15}, it has allowed us to generate the random
  graphs provided in figures to this article.
\end{remark}

\subsection{Applying the dissymmetry theorem.\label{subsec:unroot-3leaf}}

\begin{theorem}%
  \label{thm:3lp-unrooted-grammar}%
  The class ${\clsTLP}$ of unrooted $3$-leaf power graphs is specified by
\begin{align}
  \clsTLP                &= \cls{K}+\cls[S]{T} + \cls[S-S]{T} - \cls[S\rightarrow S]{T}\\[0.4em]
  \cls[S]{T}             &= \cls{L}\times \cls[C]{S}\\
  \cls[S-S]{T}           &= \Set[2]{\cls[X]{S}}\\
  \cls[S\rightarrow S]{T}&= \cls[X]{S}\times\cls[X]{S}\\[0.4em]
  \cls[C]{S}             &= \Set[\geq 2]{\cls{L}+\cls[X]{S}}\\
  \cls[X]{S}             &= {\cls{L}}\times\Set[\geqslant 1]{\cls{L} + \cls[X]{S}}\\
  \cls{L}                &= \clsAtom + \Set[\geqslant 2]{\clsAtom}\\
  \cls{K}                &= \Set[\geq 3]{\clsAtom}\text{.}
\end{align}

\end{theorem}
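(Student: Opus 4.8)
The plan is to derive the unrooted specification from the rooted grammar of Theorem~\ref{thm:3lp-rooted-grammar} by applying the dissymmetry theorem~\eqref{eq:dissymmetry}, using Lemma~\ref{lem:no-leaves} to discard all leaf-rerootings. So the first step is to identify the types of internal nodes that can appear in a reduced clique-star tree of a 3-leaf power graph: by the characterization theorem of Gioan--Paul, the star-nodes form a connected subtree whose centers point only to leaves or clique-nodes, and the remaining internal nodes are cliques. Hence $\cls[\mINode]{T} = \cls[S]{T} + \cls[K']{T}$ where $\cls[S]{T}$ is the class rerooted at a star-node and $\cls[K']{T}$ at a clique-node; similarly $\cls[\mIEdge]{T}$ and $\cls[\mDIEdge]{T}$ split according to the (unordered, resp.\ ordered) pair of node-types at the endpoints of the distinguished edge. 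Because two clique-nodes are never adjacent (clique-join) and a star-center is never adjacent to a star-extremity (star-join), the only admissible internal--internal edges are star--star (extremity to extremity) and clique--star.

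The next step is to write each of these rerooted classes in terms of the already-defined subtree classes $\cls[C]{S}$, $\cls[X]{S}$, $\cls{L}$, $\cls{K}$. Rerooting at a star-node: the node has some children through its center and its extremities; by the Gioan--Paul constraint the center leads to a ``leaf'' $\cls{L}$, and each extremity leads either to a ``leaf'' or to a star entered by an extremity, i.e.\ to $\cls{L}+\cls[X]{S}$, giving $\cls[S]{T} = \cls{L}\times\cls[C]{S}$ once we recognise $\cls[C]{S} = \Set[\geq 2]{\cls{L}+\cls[X]{S}}$. Rerooting at a clique-node is exactly $\cls{K} = \Set[\geq 3]{\clsAtom}$, but I expect this term to cancel: indeed a clique-node is adjacent only to leaves and to star-nodes, and every edge from a clique to a star is a clique-to-star-extremity edge. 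Rerooting at such an edge, one side is the clique seen from that vertex and the other is a star entered by an extremity; but $\cls{K}$ seen from a non-distinguished vertex is just $\Set[\geq 2]{\clsAtom}=\cls{L}$ minus the bare atom, and combined with $\cls[X]{S}=\cls{L}\times\Set[\geq 1]{\cdots}$ this reorganises so that $\cls[K']{T}$ cancels against the clique--star edge term, leaving only the star-node term $\cls[S]{T}$, the star--star edge term $\cls[S-S]{T}=\Set[2]{\cls[X]{S}}$, the directed star--star term $\cls[S\to S]{T}=\cls[X]{S}\times\cls[X]{S}$, and the standalone $\cls{K}$ accounting for the no-star-node case. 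Assembling $\clsTLP = \cls{K} + \cls[S]{T} + \cls[S-S]{T} - \cls[S\to S]{T}$ then matches the claimed grammar, where $\cls[C]{S}$, $\cls[X]{S}$, $\cls{L}$ retain their meanings from the rooted grammar.

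The delicate point, and the part requiring the most care, is the bookkeeping that makes the clique-node and clique--star-edge contributions cancel exactly, and checking that after discarding leaves (Lemma~\ref{lem:no-leaves}) nothing is double-counted among $\cls[S]{T}$, $\cls[S-S]{T}$, $\cls[S\to S]{T}$. One must verify that a tree with \emph{no} internal node at all (a single clique, or the degenerate $1$- or $2$-vertex cases) is handled correctly: the $\cls{K}$ summand supplies the single-clique graphs, while Footnote~\ref{footnote:no-singletons} disclaims the first two terms. One must also confirm that the dissymmetry identity $\cls{A}+\cls{A}_{\mDEdge}\simeq\cls{A}_{\mNode}+\cls{A}_{\mEdge}$ is being applied to the right unrooted class, namely the clique-star trees themselves (equivalently, via Cunningham's theorem, the graphs), and that pointing an internal node or edge of the tree never forces a choice that breaks the Gioan--Paul star-subtree connectivity. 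Once the cancellation is checked term by term, translating back through Cunningham's one-to-one correspondence between reduced split-decomposition trees and connected graphs gives the stated specification of $\clsTLP$.
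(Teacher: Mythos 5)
Your overall architecture---dissymmetry theorem plus Lemma~\ref{lem:no-leaves}, rerooting at internal nodes and at internal--internal edges---is sound, but the one step you yourself flag as delicate, the cancellation of the clique-rooted term against the clique--star edge term, is exactly where your argument breaks as written. You justify it by asserting that ``every edge from a clique to a star is a clique-to-star-extremity edge.'' That is false: condition (c) of the Gioan--Paul characterization says the \emph{center} of a star-node is incident to a leaf \emph{or a clique node}, so clique--star edges attached at the star's center do occur; indeed your own $\cls[S]{T}=\cls{L}\times\cls[C]{S}$ relies on this, since $\cls{L}$ contains the clique case, which contradicts your claim about the edges. The correct bookkeeping is: a clique-node has at most one non-leaf neighbour (a star, entered either at its center or at an extremity, because the star-nodes form a connected subtree), hence $\cls[K]{T}=\Set[\geqslant 3]{\clsAtom}+\Set[\geqslant 2]{\clsAtom}\times\left(\cls[C]{S}+\cls[X]{S}\right)$ while $\cls[K-S]{T}=\Set[\geqslant 2]{\clsAtom}\times\left(\cls[C]{S}+\cls[X]{S}\right)$; combined with $\cls[K-S]{T}-\cls[K\rightarrow S]{T}-\cls[S\rightarrow K]{T}\simeq-\cls[K-S]{T}$, this leaves exactly the standalone $\cls{K}$ for the star-free trees. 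So the cancellation you ``expect'' does hold, but not for the reason you give, and you never write the two terms down to verify it: as submitted, the key step is asserted rather than proved.

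For comparison, the paper sidesteps the issue entirely: since every neighbour of a clique-node other than its unique possible star neighbour is a leaf, clique-nodes may be treated as leaves (footnote to the paper's proof), so Lemma~\ref{lem:no-leaves} discards them and the only rerootings needed are at star-nodes and at star--star edges (necessarily extremity--extremity), with $\cls{K}$ added separately for the graphs reduced to a clique. Your variant---keeping cliques as internal nodes and cancelling $\cls[K]{T}$ against $\cls[K-S]{T}$---mirrors the paper's own proof of Theorem~\ref{thm:dh-unrooted-grammar} for distance-hereditary graphs and is perfectly viable, but only once the clique--star edge class is described correctly and the cancellation is carried out explicitly.
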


\begin{proof}
  From the dissymmetry theorem, we have the symbolic equation linking the
  rooted and unrooted decomposition tree of 3-leaf power graphs,
  \begin{align*}
    \clsTLP &= {{\clsTLP}_{\mNode}} + 
               {{\clsTLP}_{\mEdge}} -
               {{\clsTLP}_{\mDEdge}}\text{.}
  \end{align*}
  As per Lemma~\ref{lem:no-leaves}, it suffices to consider only internal
  nodes, and the only type of internal node found in these
  split-decomposition trees is the star-node\footnote{In the
    split-decomposition of a 3-leaf power graph, clique-nodes cannot have
    any children other than leaves; as a result, they may be considered as
    leaves for the purpose of the dissymmetry theorem.}.
  
  So we must reroot the grammar ${\clsTLPrl}$, which is rooted at a leaf
  of the split-decomposition tree, to each of: a star-node, an undirected
  edge connecting two star-nodes, and a directed edge connecting two
  star-nodes.
  
  Rerooting at a star-node, we must consider all the outgoing edges of the
  star. The center will lead either to a leaf, or to a clique---this is
  the rule $\cls{L}$; what remains are then the extremities, which can be
  expressed by the term $\cramped{\cls[C]{S}}$. Since the center is
  distinguished, this is combined as a Cartesian product, hence
  \begin{align*}
    \cls[S]{T}             &= \cls{L}\times \cls[C]{S}\text{.}
  \end{align*}
  Next, we reroot at an edge. Since these split-decomposition trees are
  reduced, two star-nodes can only be adjacent at their respective
  centers, or at two extremities; but because of the additional constraint
  for 3-leaf power graphs, two star-nodes can only be adjacent at their
  extremities.
  
  Rerooting at an undirected edge will yield a set containing two
  elements; rerooting at a directed edge will yield a Cartesian product.
  Thus, we have
  \begin{align*}
    \cls[S-S]{T}           &= \Set[2]{\cls[X]{S}}\\
    \cls[S\rightarrow S]{T}&= \cls[X]{S}\times\cls[X]{S}\text{.}
  \end{align*}
  Finally, as with the original vertex-rooted grammar ${\clsTLPrl}$, we
  must deal with the special case of a graph reduced to a clique, as it
  does not involve any star-node.
\end{proof}

\tmpsetcounter{corollary}{2}
\begin{corollary}[Enumeration of unlabeled, unrooted $3$-leaf power
  graphs]
  The first few terms of the enumeration, \EIS{A00000}, are
  \begin{align*}
      &1, 1, 2, 5, 12, 32, 82, 227, 629, 1840, 5456, 16701,\\
      &\qquad 51939, 164688, 529070, 1722271, 5664786,\\
      &\qquad 18813360, 62996841, 212533216\ldots
  \end{align*}
\end{corollary}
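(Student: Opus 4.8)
The plan is to feed the combinatorial specification of Theorem~\ref{thm:3lp-unrooted-grammar} into the standard symbolic‑method dictionary of Flajolet and Sedgewick~\cite{FlSe09} for \emph{unlabeled} structures, and then extract coefficients. Since every occurrence of $\Set{\cdot}$ in that grammar is an \emph{unordered} (multiset) construction, each one translates, on ordinary generating functions, through the Pólya exponential: $\Set{\cls{B}}$ has OGF $\exp\!\big(\sum_{k\ge 1} B(z^k)/k\big)$, and the size‑restricted variants $\Set[\geqslant k]{\cdot}$, $\Set[2]{\cdot}$ are obtained from it in the usual way (subtracting the low‑cardinality terms, resp.\ using $\tfrac12\big(B(z)^2+B(z^2)\big)$). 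The atom $\clsAtom$ has OGF $z$, disjoint union is addition, and Cartesian product is multiplication.

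First I would record the closed forms that do not involve the recursive classes. Because a multiset of indistinguishable atoms is determined by its cardinality, $L(z)=z+\tfrac{z^2}{1-z}=\tfrac{z}{1-z}$ and $K(z)=\tfrac{z^3}{1-z}$. Next I would set up the fixed‑point system for the two families of star‑rooted subtrees, $S_X(z)=L(z)\,\Big(\exp\!\big(\sum_{k\ge 1}\tfrac{L(z^k)+S_X(z^k)}{k}\big)-1\Big)$ and $S_C(z)=\exp\!\big(\sum_{k\ge 1}\tfrac{L(z^k)+S_X(z^k)}{k}\big)-1-\big(L(z)+S_X(z)\big)$. The first equation is a genuine fixed point: its right‑hand side has valuation $\ge 2$ and the $z^k$‑substitutions only pull in strictly smaller indices, so $[z^n]S_X$ depends only on $[z^j]S_X$ with $j\le n-1$; hence $S_X$, and then $S_C$, are computed order by order.

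From there the three rerooted classes give $T_S(z)=L(z)\,S_C(z)$, $T_{S-S}(z)=\tfrac12\big(S_X(z)^2+S_X(z^2)\big)$ and $T_{S\to S}(z)=S_X(z)^2$, and the dissymmetry identity of Theorem~\ref{thm:3lp-unrooted-grammar} yields the enumeration at the level of coefficients, namely $[z^n]\clsTLP=[z^n]\big(K(z)+T_S(z)+T_{S-S}(z)-T_{S\to S}(z)\big)$; as noted in the remark following Lemma~\ref{lem:no-leaves}, the subtraction carries no combinatorial meaning but is perfectly valid arithmetically. Expanding this series produces the announced terms $1,1,2,5,12,32,\dots$, after the one cosmetic adjustment flagged in Footnote~\ref{footnote:no-singletons}: Cunningham's theorem forbids internal nodes of degree ${<}\,3$, so the grammar omits the graphs on $1$ and $2$ vertices, which are the single vertex $K_1$ and the edge $K_2$ and must be added back by hand (each contributing a $1$). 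I would also sanity‑check the first few values directly — for instance, for $n=3$ the only $3$‑leaf power graphs are $K_3$ and the path $S_3$, matching the coefficient $2$.

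The step I expect to be the real work is the bookkeeping around the Pólya substitutions: one must expand $\exp\!\big(\sum_{k}\cdot/k\big)$ consistently, handle each of $\Set[\geqslant 1]{\cdot}$, $\Set[\geqslant 2]{\cdot}$, $\Set[\geqslant 3]{\cdot}$, $\Set[2]{\cdot}$ with the correct subtracted low‑order terms, and truncate the fixed‑point iteration for $S_X$ and $S_C$ at the appropriate order; everything past that is a routine coefficient extraction (and is exactly what allowed the first $10\,000$ terms to be tabulated). Deriving the companion asymptotics $c\cdot 3.848442876\ldots^{\,n}\cdot n^{-5/2}$ stated in Corollary~\ref{corr:enum-unlabeled-unrooted-tlp} is a separate singularity analysis of the above system and is not required for the present statement.
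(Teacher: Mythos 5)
Your proposal is correct and takes essentially the same route as the paper: you translate the unrooted grammar of Theorem~\ref{thm:3lp-unrooted-grammar} through the P\'olya dictionary for unlabeled classes (obtaining exactly the equations for $L(z)$, $S_X(z)$, $T_S(z)$, $T_{S-S}(z)$, $T_{S\to S}(z)$ that the paper itself writes down in Subsection~\ref{sec:3LP_asymp}), extract coefficients from the well-founded fixed-point system, and correct the sizes $1$ and $2$ by hand as flagged in Footnote~\ref{footnote:no-singletons}. Incidentally, your $T_{S-S}(z)=\tfrac12\bigl(S_X(z)^2+S_X(z^2)\bigr)$ is the correct translation of $\Set[2]{\cls[X]{S}}$; the paper's displayed formula for $T_{S-S}(z)$ omits the square on $S_X(z)$ (a typo), but its final expression for $U(z)$ agrees with yours.
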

\tmprestorecounter{corollary}

\section{Distance-Hereditary Graphs\label{sec:dh}}


A graph is \emph{totally decomposable} by the split-decomposition if every
induced subgraph with at least 4 vertices contains a split. And it is
well-known~\cite{HaMa90} that the class of totally decomposable graphs is
exactly distance-hereditary graphs.

Deriving the rooted grammar provided in
Theorem~\ref{thm:dh-rooted-grammar} is easier than for 3-leaf power
graphs, because there are few constraints on the split-decomposition tree
of distance-hereditary graphs; as a result, applying the dissymmetry
theorem will be a bit more involved because there are two types of
internal nodes at which to reroot the tree.

\begin{theorem}%
  \label{thm:dh-rooted-grammar}%
  The class $\clsDHrl$ of distance-hereditary graphs rooted at a vertex is specified by
  \begin{align}
    \clsDHrl    &= \clsAtom_{\mLeaf}\times\left(\cls{K} + \cls[C]{S} + \cls[X]{S}\right)\\
    \cls{K}     &= \Set[\geqslant 2]{\clsAtom + \cls[C]{S} + \cls[X]{S}}\label{eq:dhrl-c}\\
    \cls[C]{S}  &= \Set[\geqslant 2]{\clsAtom + \cls{K} + \cls[X]{S}}\label{eq:dhrl-sc}\\
    \cls[X]{S}  &= \Seq[\geqslant 2]{\clsAtom + \cls{K} + \cls[C]{S}}\label{eq:dhrl-sx}\text{.}
  \end{align}
\end{theorem}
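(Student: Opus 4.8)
The plan is to derive each line of the specification directly from the structure of reduced split-decomposition trees of distance-hereditary graphs, exactly as was done for $3$-leaf power graphs but now accounting for the fact that every internal node is either a clique-node or a star-node, with no extra connectivity constraint on the star-nodes. First I would recall the two ingredients: Cunningham's uniqueness theorem (the tree is reduced, so no clique--clique adjacency and no center-of-star--extremity-of-star adjacency) and the fact that for distance-hereditary graphs the only labels appearing are cliques $K_k$ and stars $S_k$ with $k\geqslant 3$. Rooting at a vertex means rooting at a leaf of the split-decomposition tree; that leaf is attached by a tree-edge to exactly one internal node, which is a clique-node, a star-node entered through its center, or a star-node entered through an extremity --- this gives the three summands of $\clsDHrl$, with the leading $\clsAtom_{\mLeaf}$ being the root vertex itself and the Cartesian product recording that the root is distinguished.

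Next I would justify the three recursive equations one at a time, in each case describing a rooted internal node by the multiset of subtrees hanging from its non-parent incident tree-edges, where each such subtree is again described by the type of its topmost internal node (or is a bare leaf $\clsAtom$). For $\cls{K}$ (a subtree rooted at a clique-node, reached from its parent through one of its $\geqslant 3$ vertices, hence with $\geqslant 2$ remaining incident edges): each remaining edge leads to a leaf, or to a star-node --- entered at its center or at an extremity --- but \emph{not} to another clique-node, by the reduced condition; since the children of a clique-node are unordered we get $\Set[\geqslant 2]{\clsAtom + \cls[C]{S} + \cls[X]{S}}$. For $\cls[C]{S}$ (rooted at a star-node reached through its center): the $\geqslant 2$ remaining edges are all extremities, and an extremity may lead to a leaf, a clique-node, or a star-node entered through an extremity, but \emph{not} a star-node entered through its center (that would be a center--extremity adjacency, forbidden by reduction); again the children are unordered, giving $\Set[\geqslant 2]{\clsAtom + \cls{K} + \cls[X]{S}}$. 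For $\cls[X]{S}$ (rooted at a star-node reached through an extremity): here the incident non-parent edges split into the unique center-edge and the remaining extremity-edges; the center-edge leads to a leaf, a clique-node, or a star-node entered through its center, while each extremity-edge leads to a leaf, a clique-node, or a star-node entered through an extremity. Both lists draw from $\clsAtom + \cls{K} + \cls[C]{S}$ (resp.\ $\clsAtom + \cls{K} + \cls[X]{S}$); I would note that because a star with $k\geqslant 3$ vertices has $\geqslant 2$ extremities, the "center slot plus the extremity slots" together form a structure of size $\geqslant 2$, and --- crucially --- I would explain why this is modelled as a single $\Seq[\geqslant 2]{\,\cdot\,}$: the center can be merged with the extremity list because an arbitrary ordering of a sequence lets the first coordinate play the role of the distinguished center while the rest, taken as an (ordered representative of an) unordered collection, play the role of the extremities, so that a $\Seq$ over the common alphabet $\clsAtom + \cls{K} + \cls[C]{S}$ reproduces exactly the right count; this is the one genuinely delicate bookkeeping point, since the extremities are really a set and only the asymmetry between center and extremities forces the sequence, and one must check the substitution $\cls[C]{S}\leftrightarrow\cls[X]{S}$ in the center versus extremity positions is harmless because of reduction.

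The main obstacle I anticipate is precisely this last equation: getting the combinatorics of "one distinguished neighbour (the center) together with an unordered bag of at least one other neighbour (the extremities)" to collapse cleanly into $\Seq[\geqslant 2]{\clsAtom + \cls{K} + \cls[C]{S}}$ rather than into something like $(\clsAtom + \cls{K} + \cls[C]{S})\times\Set[\geqslant 1]{\clsAtom + \cls{K} + \cls[X]{S}}$ as in the $3$-leaf power case, and arguing that for distance-hereditary graphs (where the only forbidden star-adjacency is center-to-extremity, with extremity-to-extremity allowed) these two descriptions are equinumerous --- essentially because in an $\mNode$-non-plane tree a star-node imposes an ordering only to the extent of separating center from extremities, and the symmetric group acts on the extremities, making a length-$(k-1)$ sequence over the appropriate alphabet a valid encoding. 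I would close by restating the footnote caveat from the excerpt that, like all grammars here, this specification is only claimed correct for graphs on $n\geqslant 3$ vertices, since Cunningham's theorem forces internal nodes to have degree $\geqslant 3$ and the sizes $n=1,2$ must be handled separately; the three summands $\cls{K}$, $\cls[C]{S}$, $\cls[X]{S}$ and the recursions above then exhaust all reduced clique-star trees rooted at a leaf, which by the correspondence of Cunningham's theorem is what needed to be shown.
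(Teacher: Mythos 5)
Your treatment of the root rule and of $\cls{K}$ and $\cls[C]{S}$ is the same as the paper's and is fine. The gap is at the step you yourself flag as delicate: equation~\eqref{eq:dhrl-sx}. The direct description of a star entered through an extremity is
\begin{align*}
  \cls[X]{S} &= \left(\clsAtom +\cls{K}+\cls[C]{S}\right)\times
               \Set[\geqslant 1]{\clsAtom + \cls{K} + \cls[X]{S}}\text{,}
\end{align*}
and your argument for collapsing this to $\Seq[\geqslant 2]{\clsAtom + \cls{K} + \cls[C]{S}}$ --- that the first coordinate of the sequence can play the role of the center while the remaining coordinates are an ``ordered representative of an unordered collection'' of extremities --- does not work. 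For unlabeled classes $\Set[k]{\cls{A}}$ and $\Seq[k]{\cls{A}}$ have different counting series (the multiset construction translates via the P\'olya exponential $\exp\bigl(\sum_{i\geqslant 1} A(z^i)/i\bigr)$, the sequence via $1/(1-A(z))$), so a sequence overcounts each multiset of extremities by roughly a multinomial factor; and, independently, the extremity slots carry subtrees of type $\clsAtom + \cls{K} + \cls[X]{S}$ whereas your proposed common alphabet is $\clsAtom + \cls{K} + \cls[C]{S}$, a different class, so center and extremity positions cannot be merged slot by slot. Taken literally, the encoding you sketch is not a bijection and gives the wrong count; asserting that the swap $\cls[C]{S}\leftrightarrow\cls[X]{S}$ is ``harmless because of reduction'' is exactly the claim that needs proof.

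What is actually needed (and what the paper proves in Appendix~\ref{app:dh-simplification}) is a substitution-and-unrolling argument: split $\Set[\geqslant 1]{\cls{U}}=\cls{U}+\Set[\geqslant 2]{\cls{U}}$ with $\cls{U}=\clsAtom + \cls{K} + \cls[X]{S}$, recognize from~\eqref{eq:dhrl-sc} that $\Set[\geqslant 2]{\cls{U}}$ \emph{is} $\cls[C]{S}$, so that, writing $A=\clsAtom + \cls{K} + \cls[C]{S}$, one gets $\cls[X]{S}=A\times\left(A+\cls[X]{S}\right)=A^{2}+A\times\cls[X]{S}$, whence $\cls[X]{S}=\Seq[\geqslant 2]{A}$ by iterating (equivalently, solving this linear relation). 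Combinatorially, the order in the sequence comes from the \emph{chain} of star-nodes glued extremity-to-extremity that one descends from the root extremity: each term of the sequence is the center-subtree of one star in that chain, and the final term absorbs the last star's bundle of remaining extremities as a $\cls[C]{S}$; it does not come from ordering the extremities of a single star. Without this step (or an equivalent bijection), the stated form of~\eqref{eq:dhrl-sx} is not established; with it, the rest of your argument coincides with the paper's proof.
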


\begin{proof}
  We describe a grammar for clique-star trees subject only to the
  irreducibility constraint: a star's center cannot be connected to the
  extremity of another star (see Figure~\ref{fig:star-join}), and two
  cliques cannot be connected (see Figure~\ref{fig:clique-join}).

  We start with the following rule
  \begin{align*}
    \clsDHrl    &= \clsAtom_{\mLeaf}\times\left(\cls{K} + \cls[C]{S} + \cls[X]{S}\right)
  \end{align*}
  in which $\cramped{\clsAtom_{\mLeaf}}$, the vertex at which the
  split-decomposition tree is rooted, can be connected either to a clique
  $\cls{K}$, or to a star's extremity $\cramped{\cls[X]{S}}$, or to a
  star's center $\cramped{\cls[C]{S}}$.

  Next, we describe subtrees rooted at a clique
  \begin{align*}
    \cls{K}     &= \Set[\geqslant 2]{\clsAtom + \cls[C]{S} + \cls[X]{S}}\text{,}
  \end{align*}
  we are connected to our parent by one of the outgoing edges of the
  clique, and because clique-nodes have size at least 3 (see Cunningham's
  Theorem in Subsection~\ref{subsec:split} which requires non-leaf nodes
  to have degree at least 3), we are left with at least two subtrees to
  describe:
  \begin{itemize}
  \item these subtrees can either be a leaf $\clsAtom$, or a star entered
    either by its center $\cramped{\cls[C]{S}}$ or its extremity
    $\cramped{\cls[X]{S}}$---they cannot be another clique because our
    tree could then be reduced with a clique-join operation;
  \item because of the symmetries within a clique (in particular there is
    no ordering of the vertices), the order of the subtrees does not
    matter, and so these are described by a $\Set$ operation.
  \end{itemize}
  
  \noindent By similar arguments, we describe subtrees rooted at a star
  which is connected to its parent by its center,
  \begin{align*}
    \cls[C]{S}  &= \Set[\geqslant 2]{\clsAtom + \cls{K} + \cls[X]{S}}\text{.}
  \end{align*}
  Because the star's center is connected to its parent, we only need
  express what the extremities are connected to; each of these can be
  connected to a leaf, a clique, or another star by one of that star's
  extremity (to avoid a star-join). Again, as the extremities are
  indistinguishable from each other---the star is not planar---we describe
  the subtrees by a $\Set$ operation.
  
  We are left with the subtrees rooted at star which is connected to its
  parent by an extremity; these may be described by
  \begin{align*}
    \cls[X]{S}  &= \left(\clsAtom +\cls{K}+\cls[C]{S}\right)\times
                      \Set[\geqslant 1]{\clsAtom + \cls{K} + \cls[X]{S}}\text{.}
  \end{align*}
  Indeed, the first term of the Cartesian product is the subtree to which
  the center is connected (either a leaf, a clique, or another star at its
  center); the $\Set$ expresses the remaining extremities---of which there
  is at least one. This equation can be simplified to obtain the one in
  the Theorem---but this simplification is proven in
  Appendix~\ref{app:dh-simplification}.
\end{proof}

\begin{remark}
  We notice the same symbolic rules for the clique-node $\cls{K}$ and the
  star-node $\cramped{\cls[C]{S}}$ entered through the center,
  respectively in Equations~\eqref{eq:dhrl-c} and~\eqref{eq:dhrl-sc}. This
  suggest these nodes play a symmetrical role in the overall grammar, and
  that their associated generating function (and enumeration) are
  identical.

  It would be mathematically correct to merge both rules, \textit{e.g.}
  \begin{align}
    \clsDHrl &= \clsAtom_{\mLeaf}\times\left(\cls{K} +
                {\cls{K}} + \cls{S}\right)\\
    \cls{K}  &= \Set[\geqslant 2]{\clsAtom + {\cls{K}} + \cls{S}}\label{eq:dhrs-k}\\
    \cls{S}  &= \Seq[\geqslant 2]{\clsAtom + \cls{K} + {\cls{K}}}\label{eq:dhrs-s}\text{.}
  \end{align}

  \noindent This may be convenient (and lead to additional
  simplifications) for some usages, such as the application of asymptotic
  theorems like those introduced by Drmota~\cite{Drmota97} (see
  Section~\ref{sec:asympt}) which requires classes be expressed as a
  single functional equation.

  However the combinatorial meaning of the symbols is lost: in the above
  system, it can no longer be said that $\cls{K}$ represents a
  clique-node. This is problematic for parameter analysis (\textit{e.g.},
  if trying to extract the average number of clique-nodes in the
  split-tree of a uniformly drawn distance-hereditary graph).
\end{remark}

\begin{theorem}%
  \label{thm:dh-unrooted-grammar}%
  The class $\clsDH$ of unrooted distance-hereditary graphs is specified by
  \begin{align}
    \clsDH                 &= \cls[K]{T} + \cls[S]{T} + \cls[S-S]{T}
                             - \cls[K - S]{T} - \cls[S\rightarrow S]{T}
                              \label{eq:dhu-main}\\[0.4em]
    \cls[K]{T}             &=\Set[\geqslant 3]{\clsAtom + \cls[C]{S} + \cls[X]{S}}\\
    \cls[S]{T}             &=\left(\clsAtom+\cls{K}+\cls[C]{S}\right)\times \cls[C]{S}\\
    \cls[K-S]{T}           &=\cls{K}\times\left(\cls[C]{S} + \cls[X]{S}\right)\\
    \cls[S-S]{T}           &=\Set[2]{\cls[C]{S}} + \Set[2]{\cls[X]{S}}\\
    \cls[S\rightarrow S]{T}&=\cls[C]{S}\times\cls[C]{S}+ \cls[X]{S}\times\cls[X]{S}\\[0.4em]
    \cls{K}     &= \Set[\geqslant 2]{\clsAtom + \cls[C]{S} + \cls[X]{S}}\\
    \cls[C]{S}  &= \Set[\geqslant 2]{\clsAtom + \cls{K} + \cls[X]{S}}\\
    \cls[X]{S}  &= \Seq[\geqslant 2]{\clsAtom + \cls{K} + \cls[C]{S}}\text{.}
  \end{align}
\end{theorem}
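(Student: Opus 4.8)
The plan is to mirror the proof of Theorem~\ref{thm:3lp-unrooted-grammar}, but now with two kinds of internal nodes instead of one. Starting from the dissymmetry theorem~\eqref{eq:dissymmetry} applied to the clique-star trees of distance-hereditary graphs,
\[
  \clsDH \simeq \clsDH_{\mNode} + \clsDH_{\mEdge} - \clsDH_{\mDEdge},
\]
I would invoke Lemma~\ref{lem:no-leaves} to discard every leaf contribution, so that it suffices to describe the class of such trees rooted at an internal node, at an undirected edge joining two internal nodes, and at a directed edge joining two internal nodes. Since a distance-hereditary split-decomposition tree has exactly two types of internal nodes---clique-nodes and star-nodes---each of these three classes splits as a sum over the admissible node types (and, for edges, over the admissible pairs of endpoint types), the admissible pairs being dictated by the reducedness constraints already recalled in the proof of Theorem~\ref{thm:dh-rooted-grammar}: no clique--clique edge (clique-join) and no star-center--star-extremity edge (star-join).

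First I would treat $\clsDH_{\mNode}$. Rerooting at a clique-node of degree $\geqslant 3$, each incident subtree is a leaf, a star entered at its center, or a star entered at an extremity (never another clique), and these subtrees form an unordered set of size $\geqslant 3$, giving $\cls[K]{T}=\Set[\geqslant 3]{\clsAtom+\cls[C]{S}+\cls[X]{S}}$. Rerooting at a star-node, the subtree hanging from the center is a leaf, a clique, or a star entered at its center (never at an extremity), while the $\geqslant 2$ extremities are described exactly as in $\cls[C]{S}$; since the center is distinguished this is a Cartesian product, $\cls[S]{T}=(\clsAtom+\cls{K}+\cls[C]{S})\times\cls[C]{S}$. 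Both arguments reuse verbatim the case analysis already carried out for the rooted grammar. The non-recursive base cases---graphs that are a single clique $K_n$ or a single star $S_n$ with $n\geqslant 3$---are subsumed by $\cls[K]{T}$ and $\cls[S]{T}$ when all subtrees are leaves, so no separate treatment is needed beyond the usual $n\leqslant 2$ caveat of Footnote~\ref{footnote:no-singletons}.

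Next I would handle the edge-rooted parts, and this is where the distance-hereditary case genuinely differs from the 3-leaf power case. For star--star adjacencies, reducedness forces the shared edge to join two centers or two extremities; an undirected such edge yields $\Set[2]{\cls[C]{S}}+\Set[2]{\cls[X]{S}}$ and a directed one yields the ordered products $\cls[C]{S}\times\cls[C]{S}+\cls[X]{S}\times\cls[X]{S}$, exactly as in the 3-leaf power proof. The new phenomenon is the clique--star edge: since clique-nodes and star-nodes are of distinguishable kinds, an undirected clique--star edge is described by the already asymmetric product $\cls{K}\times(\cls[C]{S}+\cls[X]{S})=\cls[K-S]{T}$, but a \emph{directed} clique--star edge comes in two orientations (clique-to-star and star-to-clique), each contributing an isomorphic class, so $\clsDH_{\mDEdge}$ contributes $2\,\cls[K-S]{T}$ from clique--star edges. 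Assembling $\clsDH_{\mNode}+\clsDH_{\mEdge}-\clsDH_{\mDEdge}$ then gives net coefficient $+1$ on $\cls[K]{T}$, $\cls[S]{T}$, $\cls[S-S]{T}$, net coefficient $1-2=-1$ on the clique--star term, and $-1$ on $\cls[S\rightarrow S]{T}$, which is exactly~\eqref{eq:dhu-main}; the auxiliary rules for $\cls{K}$, $\cls[C]{S}$, $\cls[X]{S}$ are inherited unchanged from Theorem~\ref{thm:dh-rooted-grammar}.

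The main obstacle is getting the bookkeeping of the edge terms exactly right---in particular recognizing why the clique--star edge class enters with a negative sign (the factor-of-two mismatch between undirected and directed edges whose endpoints have distinguishable types, which did not arise in the 3-leaf power setting because clique-nodes there could be folded into leaves), and checking that the star--star terms correctly separate the center--center and extremity--extremity adjacencies while excluding the reducedness-forbidden center--extremity one. Everything else is a routine transcription of the rooted-grammar case analysis of Theorem~\ref{thm:dh-rooted-grammar} together with the dissymmetry-theorem machinery set up in Subsection~\ref{subsec:unrooting}.
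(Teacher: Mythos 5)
Your proposal is correct and follows essentially the same route as the paper: dissymmetry theorem plus Lemma~\ref{lem:no-leaves}, the rooted case analysis of Theorem~\ref{thm:dh-rooted-grammar} transcribed to node- and edge-rooted classes, and the same treatment of the clique--star edge (your ``two orientations'' count is just the paper's observation that $\cls[K\rightarrow S]{T}\simeq\cls[S\rightarrow K]{T}\simeq\cls[K-S]{T}$, giving the net $-\cls[K-S]{T}$). No gaps to report.
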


\begin{proof}
  This is again an application of the dissymmetry theorem for trees, and
  as before, we may ignore the leaves, and mark only the internal nodes,
  \begin{align*}
    \clsDH &= {{\clsDH}_{\mNode}} + 
              {{\clsDH}_{\mEdge}} -
              {{\clsDH}_{\mDEdge}}\text{.}
  \end{align*}
  Unlike for 3-leaf power graphs in Subsection~\ref{subsec:unroot-3leaf},
  the tree decomposition of distance-hereditary graphs clearly involves
  two types of internal nodes: cliques and stars. If we express all the
  rerooted trees we will have to express, we get the expression:
  \begin{align}
    \begin{split}
      \clsDH &= \cls[K]{T} + \cls[S]{T}\\
             &+ \cls[S-S]{T} + \cls[S-K]{T}\\
             &- \cls[S\rightarrow S]{T} - \cls[S\rightarrow K]{T}
              - \cls[K\rightarrow S]{T}\text{.}
    \end{split}
  \end{align}
  \noindent Note that we do not have a tree rerooted at an edge involving
  two cliques, because as mentioned previously, the split-decomposition
  tree would not be reduced, since the two cliques could be merged with a
  clique-join.

  A first simplification can be made, because a directed edge linking two
  internal nodes of different type is equivalent to a non-directed edge,
  because the nature of the two internal nodes already distinguishes them,
  thus in particular here
  \begin{align*}
    \cls[K\rightarrow S]{T} \simeq \cls[K-S]{T}\text{.}
  \end{align*}
  In doing so, several terms cancel out, which leads us to:
  \begin{align*}
      \clsDH &= \cls[K]{T} + \cls[S]{T} + \cls[S-S]{T}
                - \cls[K-S]{T} - \cls[S\rightarrow S]{T}\text{.}
  \end{align*}
  We then only have to express the rerooted classes:
  \begin{enumerate}[leftmargin=1.8cm, labelsep=0.5cm]
  \item[{$\cls[K]{T}$}] For a clique-node, we must account for at least
    three outgoing edges, which can be connected to anything besides
    another a clique-node.

  \item[{$\cls[S]{T}$}] For a star-node, we reuse the same trick as
    previously: we express what the center can be connected to (either a
    leaf, a clique-node or the center of another star-node), and then we
    use $\cramped{\cls[C]{S}}$ to express the remaining extremities, as
    explained in the unrooted grammar for the 3-leaf power graphs.\medskip

  \item[{$\cls[K-S]{T}$}] The undirected edge already accounts for a
    connection between a clique-node and a star-node, so we must describe
    the remaining outgoing edges of these two combined nodes: for the
    clique, this can be expressed by reusing the subtree $\cls{K}$ (which
    is exactly a tree rooted a clique which is missing one subtree---the
    one connected to the star-node); for the star, if it is connected to
    the clique-node by its extremity, we can use $\cramped{\cls[X]{S}}$,
    otherwise $\cramped{\cls[C]{S}}$.

  \item[{$\cls[S-S]{T}$}] Two star-nodes can only be connected at two of
    their extremities, or their respective centers\footnote{For the 3-leaf
      power graphs, we only considered two star-nodes connected at two of
      their extremities, because part of the characterization of 3-leaf
      power graphs is that the center of stars are oriented away from
      other stars.}; because the edge is undirected, we use a $\Set$
    operation.\medskip

  \item[{$\cls[S\rightarrow S]{T}$}] Same as above, except the edge being
    now directed, we use a Cartesian product to express that there is a
    source star-node and a destination star-node.
  \end{enumerate}
\end{proof}

\begin{figure*}
  \centering
  \includegraphics[scale=0.6]{./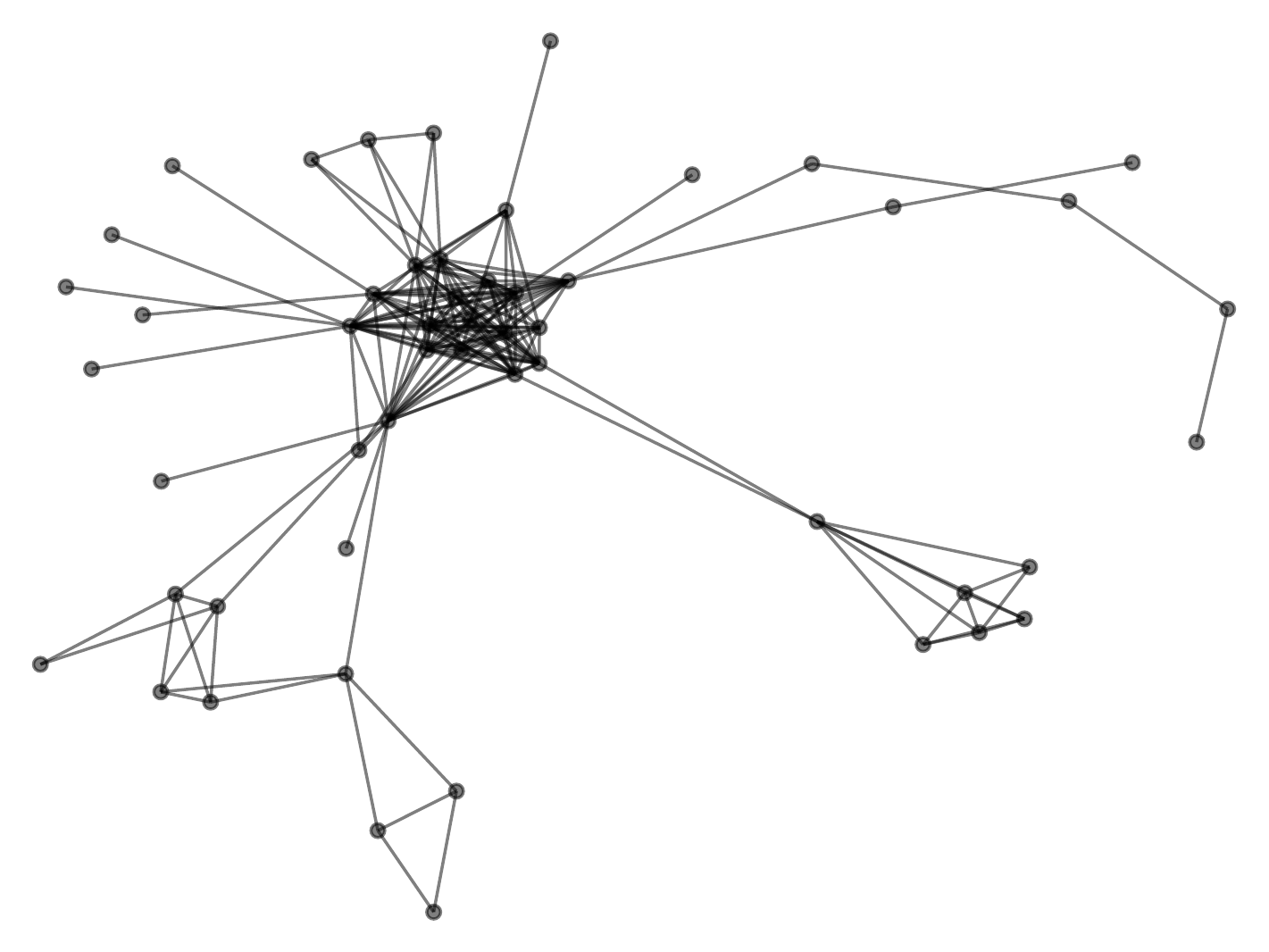}
  \caption{\label{fig:random-dh-52}A randomly generated
    distance-hereditary graph with 52 vertices, produced using the
    Boltzmann samplers developed by Iriza~\cite{Iriza15}.}
\end{figure*}

\section{Asymptotics\label{sec:asympt}}

\def\cS{\mathcal{S}}
\def\cK{\mathcal{K}}
\def\cL{\mathcal{L}}
\def\cT{\mathcal{T}}
\def\cZ{\mathcal{Z}}
\def\ni{\noindent}

In this section, using singularity analysis of generating functions, we
derive asymptotic estimates for the number of \emph{unlabeled} (rooted and
unrooted) 3-leaf power graphs and distance-hereditary graphs (with respect
to the number $n$ of vertices).
 
The strategy in both cases is very similar. The first step is to deal with
the rooted case, where the decomposition grammar
(Theorem~\ref{thm:3lp-rooted-grammar} for 3-leaf power graphs,
Theorem~\ref{thm:dh-rooted-grammar} for distance-hereditary graphs)
translates into an equation system for the corresponding generating
functions. The system is in fact sufficiently simple that, using suitable
manipulations, it can be reduced to a single-line equation of the form
$y=F(y,z)$, where $y$ is one of the rooted generating functions and all
the other ones have a simple expression in terms of $y$. The
Drmota-Lalley-Wood theorem then ensures that the rooted generating
functions classically have a square-root singularity, yielding asymptotic
estimates of the form $c\ \!\cramped{\rho^{-n}}\cramped{n^{-3/2}}$ (see
also Drmota~\cite{Drmota97}).

The next step is to study the generating function $U(z)$ for unrooted 3LP
(resp. DH) graphs. From the dissymmetry theorem
(Theorem~\ref{thm:3lp-unrooted-grammar} for 3LP graphs,
Theorem~\ref{thm:dh-unrooted-grammar} for DH graphs) we obtain an
expression of $U(z)$ in terms of $y$ and $z$, from which we can obtain a
singular expansion of $U(z)$. As expected, the subtractions involved in
the expression of $U(z)$ yield a cancellation of the square-root terms, so
that the leading singular terms are at the next order, yielding asymptotic
estimates of the form $d\ \!\rho^{-n}n^{-5/2}$ (which are usual for
unrooted ``tree-like'' structures).

\begin{remark}
  A similar approach has been previously applied to another tree
  decomposition of graphs (decomposition into 2-connected blocks and a
  tree to describe the adjacencies between blocks). Based on this
  decomposition, the generating functions for several families of graphs
  have been obtained (cacti graphs, outerplanar graphs~\cite{BoFuKaVi07b},
  series-parallel graphs~\cite{DrFuKaKrRu11}), along with asymptotics of
  the form $d\ \!\cramped{\rho^{-n}}\cramped{n^{-5/2}}$.
\end{remark}

\subsection{The case of 3-leaf power graphs.\label{sec:3LP_asymp}}

We start with the \emph{rooted} case. Let $L(z)$ and $\cramped{S_X}(z)$ be
the generating functions of $\cls{L}$ and $\c[X]{S}$; note that after
simplification (and only in the unlabeled case) $L(z)=z/(1-z)$. Then
Eq.~\eqref{eq:3lp-root-sx} in Theorem~\ref{thm:3lp-rooted-grammar} yields
\begin{align*}
S_X(z)=L(z)\cdot\Big(\exp\big(\sum_{i\geq 1}\frac1{i}(L(z^i)+S_X(z^i))\big)-1\Big).
\end{align*} 
Hence $y=S_X(z)$ satisfies the functional equation
\begin{align*}
y=\frac{z}{1-z}\cdot\big(\exp(y+B(z))-1\big),
\end{align*}
where
\begin{align*}
  B(z):=\sum_{i\geq 1}\frac{1}{i}\frac{z^i}{1-z^i}+\sum_{i\geq 2}\frac{1}{i}S_X(z^i)\text{.}
\end{align*}
This is a functional equation of the form $y=F(z,y)$, with $F(z,y)$ a
bivariate formal power series with nonnegative coefficients and that has
nonlinear dependence on $y$. Let $\rho$ be the radius of convergence of
$S_X(z)$. The fact that $F(z,y)$ is superlinear in $y$ ensures that
$S_X(z)$ converges to a finite positive value (denoted by $\tau$) when $z$
tends to $\rho$ from below. Furthermore it is easy to check
combinatorially that $[z^n]S_X(z)$ is of exponential growth
(\textit{i.e.}, that there exists $\alpha>1$ such that
$[z^n]S_X(z)\geq \alpha^n$ for $n$ large enough), thus $\rho<1$. This
easily implies that $B(z)$ is analytic at $\rho$, hence $F(z,y)$ is
analytic at $(\rho,\tau)$.

The conditions of the Drmota-Lalley-Woods theorem~\cite[Thm~VII.6
p.~489]{FlSe09} hold: the singularity of $S_X(z)$ has to be due to a
branch-point, \textit{i.e.}, we have $F_y(z,y)=1$ at $(z,y)=(\rho,\tau)$,
and $S_X(z)$ has a singular expansion around $\rho$ of the form
$$
S_X(z)=\tau-c\cdot Z+O(Z^2),\ \ \mathrm{where}\ Z=\sqrt{1-z/\rho},
$$ 
where $c=(2\rho\ \!F_z(\rho,\tau)/F_{yy}(\rho,\tau))^{1/2}$. 
Moreover $S_X(z)$ has an analytic continuation to a $\Delta$-domain of the form $\{|z|\leq \rho+\epsilon\}\cap\{z-\rho\notin \mathbb{R}_+\}$. Hence we can apply classical transfer theorems
to obtain $[z^n]S_X(z)\sim\frac{c}{\sqrt{2\pi}}\rho^{-n}n^{-3/2}$. 

\vspace{.4cm}

\ni In order to evalutate $\rho$, we have to solve the system
$\{y=F(z,y),1=F_y(z,y)\}$. There is a difficulty due to the fact that
$F(z,y)$ involves quantities $S(z^i)$ for $i\geq 2$. Following Flajolet
and Sedgewick~\cite[\S VII.5]{FlSe09}, we can however accurately
approximate these quantities by $S^{[m]}(z^i)$, where $S^{[m]}(z)$ is the
polynomial of degree $m$ coinciding with the Taylor expansion of $S(z)$ to
order $m$. Denoting by $F^{[m]}(z,y)$ the corresponding (now explicit)
approximation of $F(z,y)$, we can solve for the system
$\{y=F^{[m]}(z,y),1=F^{[m]}_y(z,y)\}$; and the obtained solution
$(\rho^{[m]},\tau^{[m]})$ is found to converge exponentially fast when $m$
increases; we find $\rho\approx 3.848442876\ldots$.

\vspace{.4cm}


We now move to the asymptotic enumeration of unrooted 3LP graphs. For that
purpose we will express the generating function $U(z)$ of unrooted 3LP
graphs in terms of $S_X(z)$ and will show that the leading singular term
is of order $Z^3$ due to a cancellation of the coefficients for terms of
order $Z$. We have to deal here with singular expansions up to order
$Z^3$, and a first important point is that (as an application of the
Drmota-Lalley-Woods theorem) $S_X(z)$ admits such an expansion, of the
form
$$
S_X(z)=\tau-c\cdot Z+d\cdot Z^2+e\cdot Z^3+O(Z^3).
$$
Let $U(z)$ be the generating function of unrooted 3LP graphs according
to the number of vertices. It follows from the grammar of Theorem 2 that 
$$
U(z)=K(z)+T_S(z)+T_{S-S}(z)-T_{S\to S}(z),
$$
We have $K(z)=z^3/(1-z)$. In order to express $T_S(z)$ in terms of
$S_X(z)$, we note that
\begin{align*}
  \cls[X]{S} &=\cls{L}\times\Set[\geq 1](\cls{L}+\cls[X]{S})\\
             &=\cls{L}\times\Set[\geq 2](\cls{L}+\cls[X]{S})
              +\cls{L}\times(\cls{L}+\cls[X]{S})\\
             &=\cls[S]{T}+\cls{L}\times(\cls{L}+\cls[X]{S})\text{,}
\end{align*}
hence
\begin{align*}
  T_S(z)&=S_X(z)-L(z)\cdot(L(z)+S_X(z))\\
        &=S_X(z)-\frac{z}{1-z}\cdot\Big(\frac{z}{1-z}+S_X(z)\Big)\text{.}
\end{align*}
Finally we have
\begin{align*}
  T_{S-S}(z)=\frac{S_X(z^2)+S_X(z)}{2}%
  \quad\text{and}\quad
  T_{S\to S}(z)=S_X(z)^2\text{,}
\end{align*}
so that we obtain
\begin{align*}
  U(z)=&\frac{z^3}{1-z}+\frac{S_X(z^2)}{2}+S_X(z)\\
  &-\frac{z}{1-z}\cdot\Big(\frac{z}{1-z}+S_X(z)\Big)-\frac{S_X(z)^2}{2}.
\end{align*}
This is of the form $U(z)=G(z,S_X(z))$, where we define 
\begin{align*}
  G(z,y):=&\frac{z^3}{1-z}+\frac{S_X(z^2)}{2}+y\\
  &-\frac{z}{1-z}\cdot\Big(\frac{z}{1-z}+y\Big)-\frac{y^2}{2}.
\end{align*}
Note that $G(z,y)$ is analytic at $(\rho,\tau)$ (because $\rho<1$ and
$S_X(z^2)$ is analytic at $\rho$). Hence $U(z)$ has a singular expansion
at $\rho$ of the form
\begin{align*}
  U(z)=\tau'-c'\cdot Z+d'\cdot Z^2+e'\cdot Z^3+O(Z^4),
\end{align*}
and moreover, around $\rho$ we have
\begin{align*}
  U(z)&=G(z,S_X(z))\\
      &=G(\rho+(z-\rho),\tau-c\cdot Z+O(Z^2))\\
      &=G(\rho,\tau)+G_y(\rho,\tau)\cdot(-c\ \!Z)+O(Z^2),
\end{align*}
hence we have $\tau'=G(\rho,\tau)$ and more importantly 
$c'=c\cdot G_y(\rho,\tau)$. We have 
$$
G_y(z,y)=1-y-\frac{z}{1-z},
$$
and we are going to show that this cancels out at $(\rho,\tau)$
(so that $c'=0$). Recall that at $(z,y)=(\rho,\tau)$ the equations
$y=F(z,y)$ and $1=F_y(z,y)$ are satisfied. These equations read
\begin{align*}
  y&=\frac{z}{1-z}\cdot\Big( \exp\big(y+B(z)\big)-1\Big),\\
  1&=\frac{z}{1-z}\cdot\exp\big(y+B(z)\big).
\end{align*}
Subtracting the second equation from the first equation we get
$y-1=-L(z)$, which is also $0=1-y-L(z)=G_y(z,y)$. Since this equation is
satisfied at $(\rho,\tau)$ we conclude that $G_y(\rho,\tau)=0$, so that
$c'=0$. It remains to check that the leading singular term of $U(z)$ is
indeed $Z^3$ (i.e., we want to make sure that $e'\neq 0$). If it was not
the case we would have $U(z)=\tau+d'\cdot Z^2+O(Z^4)$, and using transfer
theorems it would imply that $[z^n]U(z)=o(\rho^{-n}n^{-5/2})$.

Note that an unrooted 3LP graph $\gamma$ with $n$ vertices gives rise to
not more than $n$ objects in $\cS_X$ (precisely it gives rise to
$n(\gamma)$ objects in $\cS_X$, where $n(\gamma)$ is the number of
dissimilar vertices of $\gamma$ that are adjacent to a star-leaf in the
split-decomposition tree), hence $n\cdot [z^n]U(z)\geq [z^{n-1}]S_X(z)$.
Since $[z^{n-1}]S_X(z)$ is $\Theta(\rho^{-n}n^{-3/2})$ we conclude that
$[z^n]U(z)=\Omega(\rho^{-n}n^{-5/2})$, and thus $e'\neq 0$. Using transfer
theorems we conclude that
$[z^n]U(z)\sim\frac{3\ \!e'}{4\sqrt{\pi}}\rho^{-n}n^{-5/2}$.


\subsection{Distance-hereditary graphs.}

Again we start the study with the rooted case (the study is very similar,
so that we give less details here). As stated in the remark after
Theorem~\ref{thm:dh-rooted-grammar}, $\cK$ and $\cS_C$ play symmetric
roles; hence have the same generating function, which we denote by $K(z)$.
We denote by $S(z)$ the generating function of $\cS_X$. Then from
Eq.~\eqref{eq:dhrs-s}, we obtain
\begin{align*}
  S(z)=\frac{(z+2K(z))^2}{1-z-2K(z)}.
\end{align*}
Now if we define $A(z)=z+K(z)+S(z)=z+K(z)+\frac{(z+2K(z))^2}{1-z-2K(z)}$
then Eq.~\eqref{eq:dhrs-k} yields
\begin{align*}
  K(z)=\exp\Big(\sum_{i\geq 1}\frac1{i}A(z^i)\Big)-1-A(z),
\end{align*}
so that $y=K(z)$ satisfies the functional equation
\begin{align*}
  y=\exp_{\geq 2}\Big(z+y+\frac{(z+2y)^2}{1-z-2y}\Big)\exp(B(z)),
\end{align*}
with the notation $\exp_{\geq d}(t)=\sum_{i\geq d}\frac{t^i}{i!}$, and 
with $B(z):=\sum_{i\geq 2}\frac1{i}A(z^i)$. 

As in Section~\ref{sec:3LP_asymp}, this is an equation of the form
$y=F(z,y)$, with $F(z,y)$ a power series with nonnegative coefficients and
with non-linear dependency on $y$. Thus, if we denote by $\rho$ the radius
of convergence of $K(z)$, then $K(z)$ converges to a finite positive
constant (denoted by $\tau$) when $z$ tends to $\rho$ from below; and
since $y=K(z)$ does not diverge when $z$ tends to $\rho$, then we must
have $1-z-2y>0$ at $(z,y)=(\rho,\tau)$ (no cancellation of the denominator
appearing inside the exponential). Moreover, since the number of
distance-hereditary graphs grows exponentially with $n$, we must have
$\rho<1$, from which we easily deduce that $B(z)$ is analytic at $\rho$,
and that $F(z,y)$ is analytic at $(\rho,\tau)$. Hence, as in
Section~\ref{sec:3LP_asymp}, the Drmota-Lalley-Woods theorem ensures that
$K(z)$ has a singular expansion of the form
$$
K(z)=\tau-c\cdot Z+O(Z^2),\ \ \mathrm{with}\ Z=\sqrt{1-z/\rho},
$$
and has an analytic continuation in a $\Delta$-domain. Hence we can apply
transfer theorems to obtain the asymptotic estimate
$[z^n]K(z)\sim\frac{c}{\sqrt{\pi}}\rho^{-n}n^{-3/2}$. Again we can use an
iterated scheme to evaluate the constants with increasing precision, we
obtain $\rho\approx 7.249751250\ldots$.

For the unrooted case, similarly as for 3LP graphs, we express the
generating function $U(z)$ of unrooted DH graphs in terms of $K(z)$, and
verify that the leading singular term is of order $Z^3$. Again we have to
use the fact that $K(z)$ admits a singular expansion up to terms of order
$Z^3$, of the form
$$
K(z)=\tau-c\ \!Z+d\ \!Z^2+e\ \!Z^3+O(Z^4).
$$
Eq.~\eqref{eq:dhu-main} of Theorem~\ref{thm:dh-unrooted-grammar} yields
$$
U(z)=T_K(z)+T_S(z)+T_{S-S}(z)-T_{K-S}(z)-T_{S\to S}(z).
$$ 
To express $T_K(z)+T_{S-S}(z)$ in terms of $K(z)$ we observe that
\begin{align*}
\cK&=\Set[\geq 2](\cZ+\cK+\cS)\\
&=\Set[\geq 3](\cZ+\cK+\cS)+\Set[2](\cZ+\cK+\cS)\\
&=\cT_K+\Set[2](\cK)+\Set_2(\cS)+\Set[2](\cZ)\\
&\quad+\cZ\times\cK+\cZ\times\cS+\cK\times\cS\\
&=\cT_K+\cT_{S-S}+\Set[2](\cZ)+\cZ\times\cK+\cZ\times\cS+\cK\times\cS.
\end{align*}
Hence
\begin{align*}
T_K(z)+T_{S-S}(z)&=K(z)-z\ \!K(z)-z\ \!S(z)-\ \!K(z)S(z).
\end{align*}
Next, we have
$$
T_{K-S}(z)=K(z)\ \!S(z),
$$
and 
$$
T_{S\to S}(z)=K(z)^2+S(z)^2.
$$
Finally, using $S(z)=\frac{(z+2K(z))^2}{1-z-2K(z)}$, 
 we find $U(z)=G(z,K(z))$, where
$$
G(z,y):=y-z^2-\frac{(z+2y)^3}{(1-z-2y)^2}.
$$
Remarkably, $U(z)$ admits here 
a rational expression in terms of $z$ and
$K(z)$, which was not the case for 3LP graphs (recall that
the expression of $U(z)$ involved a term $S_X(z^2)$). 

Similarly as for 3LP graphs, we note that $G(z,y)$ is analytic
at $(\rho,\tau)$, so that $U(z)$ admits a singular expansion of
the form
$$
U(z)=\tau-c'\ \!Z+d'\ \!Z^2+e'\ \!Z^3+O(Z^4),
$$  
 with the relation $c'=c\ \!G_y(\rho,\tau)$. We have
$$
G_y(z,y)=\frac{(1+z+2y)(4y^2+4zy+z^2-8y-4z+1)}{(1-z-2y)^3}.
$$
In order to verify that this cancels out at $(\rho,\tau)$,  
we again use the fact that at $(\rho,\tau)$, both equations $y=F(z,y)$
and $1=F_y(z,y)$ are satisfied. Defining $R(z,y)=z+y+\frac{(z+2y)^2}{1-z-2y}$, these equations read
\begin{align*}
y&=\exp(R(z,y)+B(z))-1-R(z,y),\\
1&=R_y(z,y)\exp(R(z,y)+B(z))-R_y(z,y).
\end{align*} 
Multiplying the first one by $R_y(z,y)$ and then subtracting the  second one (so as to eliminate $\exp(R(z,y)+B(z))$), we obtain the following equation, which is satisfied at $(\rho,\tau)$:
$$
0=\frac{4y^2+4zy+z^2-8y-4z+1}{(1-z-2y)^3}.
$$
We recognize the numerator as a factor in the numerator of $G_y(z,y)$,
from which we conclude that $G_y(\rho,\tau)=0$, and thus $c'=0$.
Similarly as for 3LP graphs, the fact that $[z^n]K(z)=\Theta(\rho^{-n}n^{-3/2})$ and $[z^n]U(z)\geq \frac{1}{n}[z^{n-1}]K(z)$ ensures
that $e'\neq 0$, and $[z^n]U(z)\sim \frac{3\ \!e'}{4\sqrt{\pi}}\rho^{-n}n^{-5/2}$. 


\section{Exhaustive Enumeration\label{sec:exhaustive}}

Since most of the classes enumerated in this paper, in their various
flavors (labeled/unlabeled, rooted/unrooted, connected/disconnected), had
no known enumeration, it became useful to have some reference enumerations
to confirm the correctness of the grammars we deduced.

To this end, we have used the vertex incremental characterization of the
studied classes of graphs. These are surprisingly readily available in the
graph literature, and provide a convenient---and thankfully rather
foolproof---way of finding reliable enumeration and exhaustive generation
of these classes of graphs.

\section{Conclusion}


In this paper, we have taken well-known characterization results by
established graph researchers~\cite{GiPa12}, and have turned these
characterizations into grammars, enumerations and asymptotics---for two
classes of graphs for which these were previously unknown.

This illustrates that a tool long known by graph theorists is a very
fruitful line of research in analytic combinatorics, of which this paper
is likely only the beginning.

Future questions in this same line may focus, for instance, on the
parameter analysis. For instance, Iriza~\cite[\S 7]{Iriza15} has already
empirically noted, that in the split-decomposition tree of an unrooted,
unlabeled distance-hereditary graph, the number of clique-nodes grows
approximately as $\sim 0.221n$ and the number of star nodes grows
approximately as $\sim 0.593n$. This offers some intuition as to what is a
typical ``shape'' for a distance-hereditary graph: many nodes concentrated
in a small number of cliques and then long filaments in between as in
Figure~\ref{fig:random-dh-52}. But a more qualitative investigation is
required.

Iriza also brings to light an issue with our methodology. While the
dissymmetry theorem solves many issues that have frustrated many
combinatoricians (the symmetries when enumerating unrooted trees), it does
provide a symbolic grammar for the unrooted graph classes. This prevents
us from efficiently randomly generating graphs~\cite[\S 3.2]{Iriza15}. An
interesting line of inquiry would be to refine the application of
cycle-pointing so that it is as straightforward as that of the dissymmetry
theorem.

Another promising avenue, is to investigate whether more complicated
classes of graphs can easily be enumerated. Any superset of the
distance-hereditary graphs (which are the totally decomposable graphs for
the split-decomposition) will necessarily involve the presence of
\emph{prime nodes} (internal graph labels which are neither star graphs
nor clique graphs). For instance, Shi~\cite{Shi15} has done an
experimental study of parity graphs (which have bipartite graphs as prime
nodes).

\section*{Acknowledgments}

This work was begun in 2008, when the second author was visiting the first
at Simon Fraser University; it was pursued in 2013-2014 during the third
author's post-doc at the same university; the results in this paper were
presented~\cite{ChFuLu14} at the Seventh International Conference on Graph
Transformation (ICGT 2014).

Between then and now, it has improved from the careful remarks and the
work of several of our students: Alex Iriza~\cite{Iriza15}, who provided
many of the figures, Jessica Shi, and Maryam Bahrani.

All of the figures (with the exception of Figure~\ref{fig:random-dh-52})
in this article were created in OmniGraffle~6~Pro.


\bibliographystyle{plain}
\bibliography{article2,homebrew}{}

\appendix


\section{Distance-Hereditary Grammar Simplification\label{app:dh-simplification}}

\begin{figure*}[b!]
  \centering
  \begin{minipage}[b]{.45\linewidth}
    \centering
    \includegraphics[scale=0.8]{./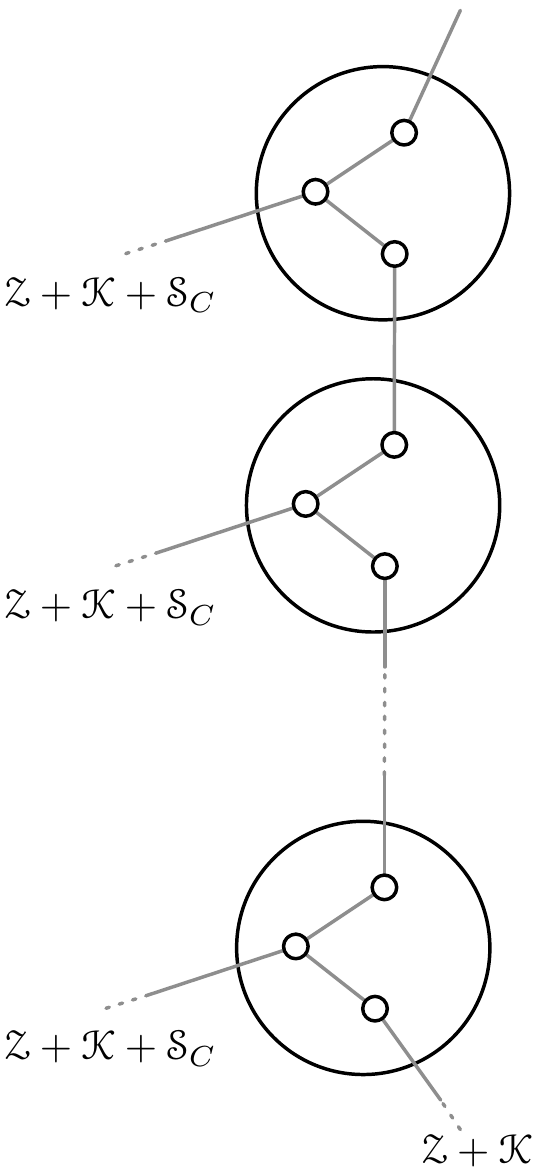}
    \subcaption{This case occurs when the last star in the subsequence of
      adjacent stars has \emph{only one} extremity.
      \label{fig:intuition-dh-grammar-simplification-a}}
  \end{minipage}\hspace{0.08\linewidth}%
  \begin{minipage}[b]{.45\linewidth}
    \centering
    \includegraphics[scale=0.8]{./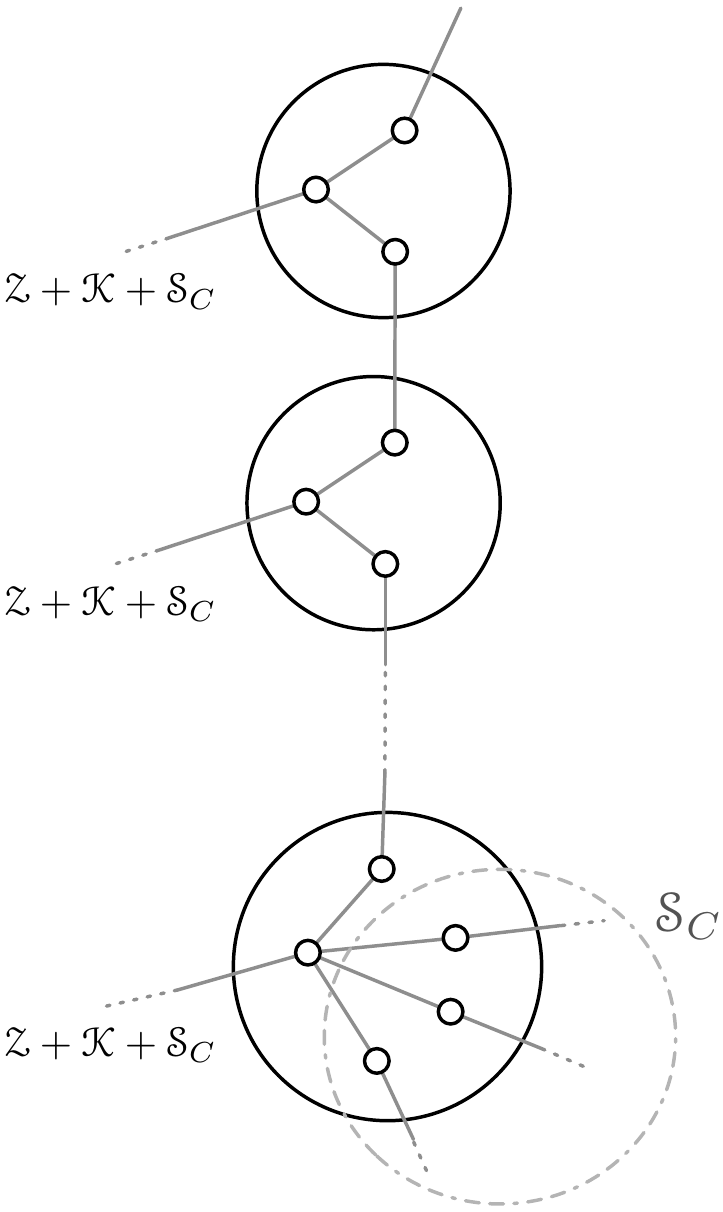}
    \subcaption{This case occurs when the last star in the subsequence of
      adjacent stars has \emph{at least two} extremities (here, it has
      three).\label{fig:intuition-dh-grammar-simplification-b}}
  \end{minipage}
  \caption{Combinatorial intuition behind the derivation of
    Appendix~\ref{app:dh-simplification}.\label{fig:intuition-dh-grammar-simplification}}
\end{figure*}

The class $\clsDHrl$ of distance-hereditary graphs rooted at a vertex is
originally specified by
\begin{align*}
  \clsDHrl    &= \clsAtom_{\mLeaf}\times\left(\cls{K} + \cls[C]{S} + \cls[X]{S}\right)\\
  \cls{K}     &= \Set[\geqslant 2]{\clsAtom + \cls[C]{S} + \cls[X]{S}}\\
  \cls[C]{S}  &= \Set[\geqslant 2]{\clsAtom + \cls{K} + \cls[X]{S}}\\
  \cls[X]{S}  &= \left(\clsAtom +\cls{K}+\cls[C]{S}\right)\times
                      \Set[\geqslant 1]{\clsAtom + \cls{K} + \cls[X]{S}}\text{.}
\end{align*}
The point of this appendix is to prove that the last equation can be
simplified to
\begin{align*}
  \cls[X]{S}  &= \Seq[\geqslant 2]{\clsAtom + \cls{K} + \cls[C]{S}}\text{.}
\end{align*}
Although we first provide a straightforward formal derivation, we then
follow it up with an intuitive explanation.

\begin{proof}
  Indeed, while the elements of a \Set have symmetries that are hard to
  take into account, this is not the case for sets of size 1, therefore
  \begin{align*}
    \Set[\geqslant 1]{\cls{U}} = \cls{U} + \Set[\geqslant 2]{\cls{U}}\text{.}
  \end{align*}
  By combining this fact with the definition of \cls[C]{S},
  \begin{align*}
    \cls[C]{S}  = \Set[\geqslant 2]{\clsAtom + \cls{K} + \cls[X]{S}}\text{,}
  \end{align*}
  we have that (parentheses in the right hand side purely for emphasis)
  \begin{align*}
    \Set[\geqslant 1]{\clsAtom + \cls{K} + \cls[X]{S}} \equiv
    \left(\clsAtom + \cls{K} + \cls[X]{S}\right) + \cls[C]{S}
  \end{align*}
  hence,
  \begin{align*}
    \cls[X]{S} &= \left(\clsAtom +\cls{K}+\cls[C]{S}\right)\times
                   \left(\clsAtom + \cls{K} + \cls[X]{S} + \cls[C]{S}\right)\\
               &= \cls[X]{S}\times\left(\clsAtom +\cls{K}+\cls[C]{S}\right)+
                  \left(\clsAtom +\cls{K}+\cls[C]{S}\right)^2
  \end{align*}
  we then proceed to manipulate this specification purely symbolically,
  implying
  \begin{align*}
    \cls[X]{S} \left[1 - \left(\clsAtom + \cls{K} + \cls[C]{S}\right)\right] =
    \left(\clsAtom + \cls{K} + \cls[C]{S}\right)^2
  \end{align*}
  and thus
  \begin{align*}
  \cls[X]{S} &=
    \frac{\left(\clsAtom + \cls{K} + \cls[C]{S}\right)^2}
         {1 - \left(\clsAtom + \cls{K} + \cls[C]{S}\right)}\\
             &= \left(\clsAtom + \cls{K} + \cls[C]{S}\right)^2\times
               \Seq{\clsAtom + \cls{K} + \cls[C]{S}}\\
             &=\Seq[\geqslant 2]{\clsAtom + \cls{K} + \cls[C]{S}}\text{.}
  \end{align*}
  Finally
  \begin{align*}
    \cls[X]{S} = \Seq[\geqslant 2]{\clsAtom + \cls{K} + \cls[C]{S}}\text{.}
  \end{align*}
\end{proof}

\begin{remark}
  To understand this simplification from a combinatorial perspective,
  imagine that we have a connected subsequence of star-nodes connected by
  their extremities.
  
  Without loss of generality, we can assume that all \emph{but the last}
  of these internal star-nodes have only two extremities\footnote{The
    first star-node of the subsequence to have more than one extremity is
    the ``last'' star-node of that particular subsequence. In particular,
    it is possible for the subsequence to only have one single
    star-node.}---the one through which they are entered, and another one.
  We are then either in the situation illustrated by
  Figure~\ref{fig:intuition-dh-grammar-simplification-a} (in which the
  last star-node of the subsequence only has one additional extremity) or
  by Figure~\ref{fig:intuition-dh-grammar-simplification-b} (in which the
  last star-node has several extremities).
  
  This subsequence of adjacent star-nodes connected by their extremities,
  translates to the grammar by a recursive expansion of the
  $\cramped{\cls[X]{S}}$ rule: each of these has a
  $\cramped{(\clsAtom + \cls{K} + \cls[C]{S})}$ child for the center of
  the star, and then one other children for the other extremity. This is
  repeated until we have reached the last adjacent star-node in the
  subsequence which can either have one or multiple extremities:
  \begin{itemize}
  \item If it has only one extremity, then this extremity connects to
    either a leaf or to a clique, thus $\clsAtom + \cls{K}$
    (Figure~\ref{fig:intuition-dh-grammar-simplification-a}).
  \item Otherwise, it has two or more undistinguished extremities, in
    which case we can \emph{pretend} that this set of extremities is a
    $\cramped{\cls[C]{S}}$ term
    (Figure~\ref{fig:intuition-dh-grammar-simplification-b}).
  \end{itemize}

  \noindent Recall that the original interpretation of
  $\cramped{\cls[X]{S}}$,
  \begin{align*}
    \cls[X]{S}  &= \left(\clsAtom +\cls{K}+\cls[C]{S}\right)\times
                  \Set[\geqslant 1]{\clsAtom + \cls{K} + \cls[X]{S}}\text{,}
  \end{align*}
  is as follows: a distinguished center which can lead to either a leaf, a
  clique, or a star-node entered through its center; and a set of
  undistinguished extremities, each of which can lead to either a leaf, a
  clique, or another star-node entered through an extremity.

  The new interpretation follows the figures: we have a sequence of
  $\cramped{(\clsAtom + \cls{K} + \cls[C]{S})}$ terms for the center of
  each of the adjacent star-nodes (and we have at least one such
  star-node), and finally another such term to cover both possibilities,
  where the final star-node either has one extremity or several. This is
  equivalent to having a sequence of at least two of these terms, hence
  the simplified equation.
\end{remark}

\begin{figure*}
  \centering
  \includegraphics[scale=0.6]{./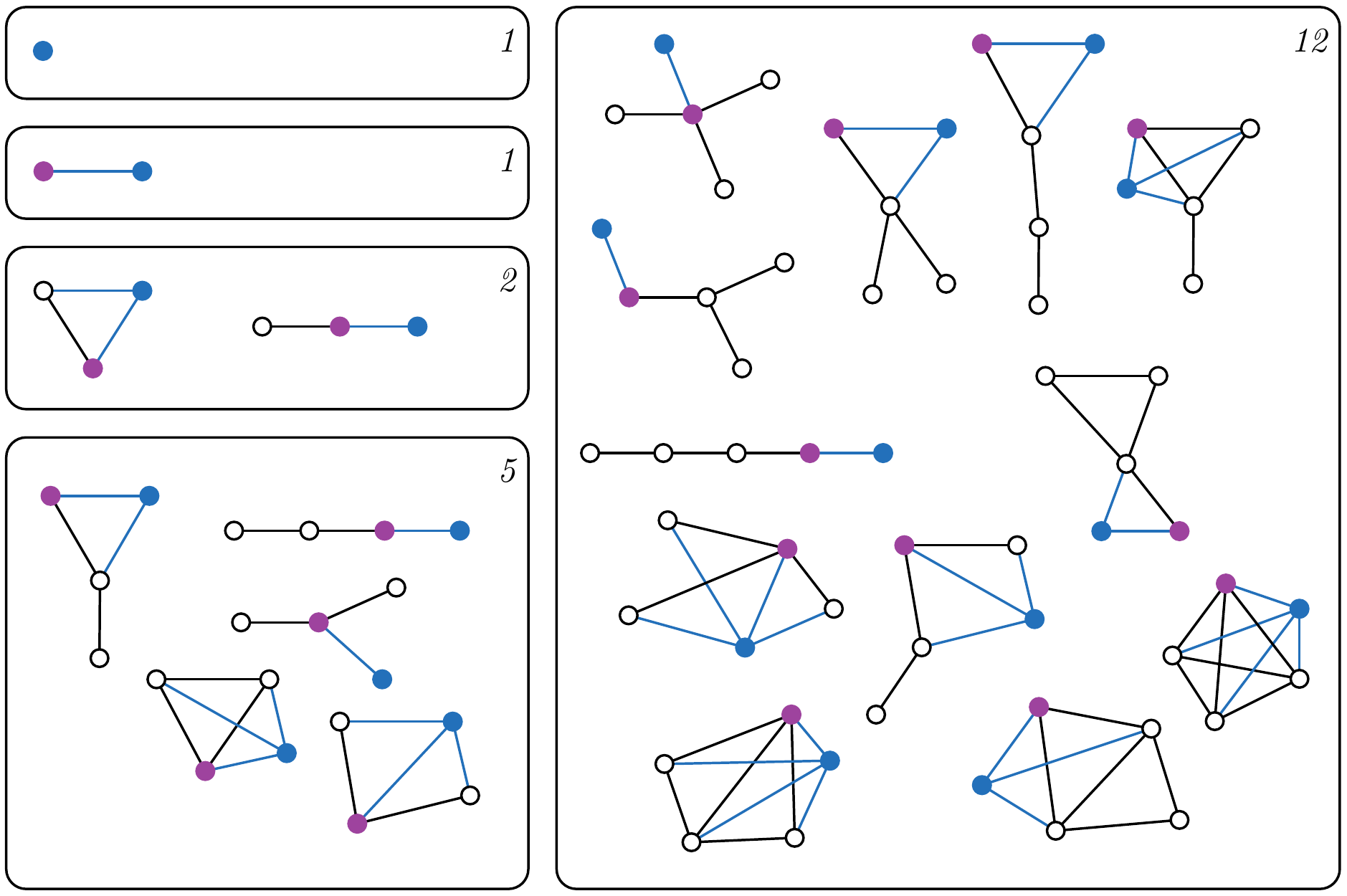}
  \caption{\label{fig:3lp-enum}All unrooted, unlabeled 3-leaf power graphs
    of sizes 1 through 5, beginning the enumeration: 1, 1, 2, 5, 12, ....
    The coloring of the vertices illustrate one possible way to derive the
    graphs through vertex incremental operations sketched in
    Section~\ref{sec:exhaustive}: the newly added vertex is in blue, while
    the existing vertex it is added in reference to is in purple.}
\end{figure*}

\end{document}